\newtheorem{lemma}{Lemma}[section]
\newtheorem{theorem}{Theorem}[section]
\newtheorem{proposition}{Proposition}[section]
\newtheorem{corollary}{Corollary}[section]
\newenvironment{proof}{{\noindent\bf Proof\ }}{$\diamondsuit$ \\}
\newenvironment{remark}{{\noindent\bf Remark\ }}
\begin{document}
\title{Factorization identities for reflected processes, with applications}
\author{Brian H. Fralix, Johan S.H. van Leeuwaarden and Onno J. Boxma \footnote{BF (corresponding author) is with Clemson University, Department of Mathematical Sciences, O-110 Martin Hall, Box 340975, Clemson, SC 29634, USA. Email: {\tt bfralix@clemson.edu}.
JvL and OB are with Eindhoven University of Technology, Department of Mathematics and Computer Science, Eindhoven, The Netherlands. Emails: {\tt j.s.h.v.leeuwaarden@tue.nl}, {\tt o.j.boxma@tue.nl}}}
\maketitle

\begin{abstract} We derive factorization identities for a class of preemptive-resume queueing systems, with batch arrivals and catastrophes that, whenever they occur, eliminate multiple customers present in the system.  These processes are quite general, as they can be used to approximate L\'evy processes, diffusion processes, and certain types of growth-collapse processes;  thus, all of the processes mentioned above also satisfy similar factorization identities.  In the L\'evy case, our identities simplify to both the well-known Wiener-Hopf factorization, and another interesting factorization of reflected L\'evy processes starting at an arbitrary initial state.  We also show how the ideas can be used to derive transforms for some well-known state-dependent/inhomogeneous birth-death processes and diffusion processes.
\end{abstract}

\noindent \textbf{Keywords:} L\'evy processes, Palm distribution, random walks, time-dependent behavior, Wiener-Hopf factorization

\noindent \textbf{2010 MSC:} 60G50, 60G51, 60G55, 60K25

\section{Introduction}

The Wiener-Hopf factorization is a classical result in both the theory of random walks and the theory of L\'evy processes.  For a L\'evy process $X$, the factorization allows us to write the position of $X$ at an independent exponential time $e_q$, i.e. $X(e_q)$, as the sum of two independent random variables:  $\inf_{0 \leq s \leq e_q}X(s)$ and $X(e_q) - \inf_{0 \leq s \leq e_q}X(s)$, with the latter random variable representing the reflection of $X$ at a random time $e_q$.  In principle, the distribution of the reflected process at time $e_q$ can be derived if and only if the distribution of the infimum of $X$ over $[0,e_q]$ is known as well.

We show that a similar type of property is also found in processes that may not necessarily be expressible as a reflection of a simpler process. To do this, we introduce the Preemptive-Resume Production system, or PRP system, and we show that it satisfies a factorization identity.  Technically, for an arbitrary PRP system the identity is not a true factorization, but it is in some cases:  when $X$ is a L\'evy process, for instance, our factorization identity is equivalent to the Wiener-Hopf factorization.  The notion of a PRP system may appear at first to be somewhat contrived, but this is not the case:  such systems can be used to approximate many types of important processes found in the probability literature, such as L\'evy processes, diffusion processes, and even Markovian growth-collapse models.

Our factorization results also provide insight into the time-dependent behavior of a number of important birth-death processes, with birth/death rates that may depend on the state of the system.  For instance, our Wiener-Hopf identity shows how the probability mass function of the $M/M/s$ queue-length at an independent exponential time $e_q$ can be expressed entirely in terms of quantities from a $M/M/1$ queue and a $M/M/\infty$ queue.  Similarly, a $M/M/s/K$ queue (assuming $s < K$, otherwise trivial) can be expressed in terms of a $M/M/\infty$ queue and a $M/M/1/(K-s)$ queue, and a similar observation may be made for Markovian queues with reneging.  In particular, the pmf for the $M/M/s/K$ queue can be quickly derived from the solutions to the $M/M/\infty$ queue and the $M/M/1/(K-s)$ queue, without having to make use of the Kolmogorov forward equations corresponding to the $M/M/s/K$ queue.  Similar expressions can also be derived for diffusions that can be expressed as limits of birth-death processes.

Readers wondering why we are interested in studying the distribution of $X(e_q)$ should note that $P(X(e_{q}) = k)$ can be expressed as $q$ times the Laplace transform of the function $P(X(t) = k)$ evaluated at $q$, where $q$ is a positive real number.  Hence, having knowledge of $X(e_q)$ yields insight into the behavior of $X(t)$, for each $t \geq 0$.  Even though we restrict ourselves to the case where $q$ is real and positive, it is possible to derive similar transform expressions for the function $P(X(t) = k)$ at complex numbers with positive real part:  readers will find explanations of how to make such extensions at various places throughout the paper, whenever they are needed.

The factorization results we present here seem to be somewhat related to those found in Millar \cite{Millar}.  The main result of \cite{Millar} establishes that for a Markov process $X$ satisfying suitable regularity conditions, the distribution of the path of $X$ from the time at which a functional of it attains a minimum is independent of the behavior of $X$ before having attained this minimum.  Contrary to \cite{Millar}, our factorization results are valid for processes that are not necessarily Markovian, and our results also show how various transforms associated with some processes can be decomposed into computable transforms associated with other types of simpler stochastic processes, as previously mentioned.

\section{Model Description}

We now define what we refer to as a Preemptive-Resume Production system, or PRP system.  At time zero there are a countably infinite number of customers present, which are labeled $n_0, n_0 -1, n_0 -2, n_0 -3, \ldots$.  The system then begins to process the work of the customer that possesses the highest label, or number, which at time zero is customer $n_0$.  The server processes jobs in accordance to the Last-Come-First-Served Preemptive-Resume discipline.  All customers possess a random, generally distributed amount of work, and the amount of work possessed by a given customer is independent of the amounts of work of all other customers that will visit, or have visited the system.  We are interested in studying the process $Q := \{Q(t); t \geq 0 \}$, where $Q(t)$ represents the label of the customer being served by the server at time $t$:  for example, $Q(0) = n_0$.

There are two sets of Poisson processes governing arrivals to the production system.  The first set governs single arrivals to the system, and consists of an independent collection of Poisson processes $\{A_{0, j}\}_{j \in \mathbb{Z}}$, where $A_{0,j}$ has rate $\lambda_{0,j}$.  At an arbitrary time $t$, when $Q(t-) = j$, we say that $A_{0,j}$ is active:  in other words, if a point of $A_{0,j}$ occurs at time $t$ while $Q(t-) = j$, then $Q(t) = j + 1$, and the new arrival is immediately given label $j + 1$.  Otherwise, the point of $A_{0,j}$ occurring at time $t$ is ignored if $Q(t-) = k \neq j$, so no new customer arrives to the system at that time.  Once the server finishes with the customer having label $j+1$, it begins serving customer $j$, returning to where it left off before previously departing.

The second set of Poisson processes govern batch arrivals of customers to the system (we allow batches to be of size one).  This second set consists of an independent collection of Poisson processes $\{A_{1,j,k}\}_{j,k \in \mathbb{Z}}$, where $A_{1,j,k}$ has rate $\lambda_{1,j}P(Z_{1,j} = k - j)$.  Again, while $Q(t-) = j$, we say that the subcollection $\{A_{1,j,k}\}_{k \in \mathbb{Z}}$ is active, so a point of $A_{1,j,k}$ at time $t$ pushes $Q$ from level $j$ to level $k$, the $k - j$ customers in the batch are instantaneously assigned labels $j+1$, $j + 2$, \ldots, $k$, and the server immediately begins processing customer $k$.  Here $Z_{1,j}$ is a generic random variable representing the jump size of the $Q$ process from level $j$:  we allow the distribution of these jumps to depend on the current level.

We further assume that catastrophes occur according to a modulated Poisson process $D := \{D(t); t \geq 0\}$, with rate $\delta_{Q(t-)}$.  At the time of a catastrophe, a random number of customers are removed from the system:  in particular, if $Q(t-) = n$, and a catastrophe occurs at time $t$, which eliminates $k$ customers, then customers $n, n-1, n-2, \ldots, n-k+1$ are immediately removed from the system, and at time $t$ the server begins to process the remaining amount of work possessed by customer $n-k$, and so $Q(t) = n-k$.  We assume that the distribution function of the number of removals at time $t$ depends on $Q(t-)$, so that the downward jump distribution of the process may depend on the level of the process, immediately before a jump.

Readers may wonder why we chose to use an infinite collection of independent Poisson processes to govern arrivals to our queueing system, while not modeling catastrophes in the same manner.  The answer lies in the proof of our main result, as modeling the arrival processes in this way allows us to derive a linear system of equations in a most efficient manner.  Indeed, catastrophes can be modeled in the same way, but these will not play as important a role in our proofs.  Our use of collections of Poisson processes to model the arrival process was inspired by Chapter 9 of Br\'emaud \cite{BremaudMC}, who makes use of such a framework when constructing continuous-time Markov chains.  Readers wishing to rigorously construct our PRP systems in the same manner can follow the procedure given there, by expanding the state space of the PRP system to include the residual service time of each customer in the system, thus making it a stochastic recursive system, and Markovian:  readers should note that customers in the system possess generally distributed amounts of work, meaning $\{Q(t); t \geq 0\}$ is not a Markov process unless the state space is expanded to include the residual service times.

Later we will use these processes to approximate L\'evy processes:  arrivals from the $\{A_{0,j}\}_{j}$ collection and service completions of the server will be used to construct Brownian motion, while the batch arrivals and catastrophe processes will be used to construct Compound Poisson processes.

Finally, we also consider a `reflected' PRP system $\{Q_{l}(t); t \geq 0\}$, where $l$ is a fixed integer.  This system behaves in a similar manner as $Q$, with the following exception:  whenever $Q_{l}$ is in a state $i$, and a catastrophe occurs which, in the original system, would place $Q$ at a level at or lower than $l$, $Q_{l}$ instead makes a transition from state $i$ to state $l$.  When $Q_l$ is at level $l$, the server stops working until the next arrival:  hence, customer $l$ is in the system for all time.  Finally, upward jumps of $Q_{l}$ behave the same as upward jumps of $Q$.  We refer to $Q_{l}$ as a reflected PRP system with reflection at level $l$.

\section{Main Results}

Our main result establishes that the process $\{Q(t); t \geq 0\}$ from the PRP system satisfies a factorization identity, which we now give.
\begin{theorem} \label{mainresult} Let $e_q$ be an exponential random variable with rate $q > 0$, independent of $Q$.  For any two integers $k, l$, where $k \geq 0$ and $l \leq n_0 = Q(0)$,
\begin{eqnarray*} P(Q_{l}(e_q) = k + l \mid Q_{l}(0) = l) &=& P(Q(e_q) = k + l \mid \inf_{0 \leq u \leq e_q}Q(u) = l) \\ &=& P(Q(e_q) - \inf_{0 \leq u \leq e_q}Q(u) = k \mid \inf_{0 \leq u \leq e_q}Q(u) = l). \end{eqnarray*} \end{theorem}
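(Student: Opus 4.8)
The plan is to exploit the Last-Come-First-Served Preemptive-Resume structure, which is exactly what makes the reflected system $Q_l$ and the original system $Q$ couple cleanly. The key observation is that when $Q$ is run from $Q(0)=n_0$ and we condition on $\inf_{0\le u\le e_q}Q(u)=l$, the server, by the LCFS-PR discipline, never completes the work of customer $l$ before time $e_q$ — customer $l$ stays ``at the bottom of the stack'' throughout $[0,e_q]$, just as customer $l$ does permanently in the reflected system $Q_l$ started from $l$. So I would first make this coupling explicit: run $Q_l$ from state $l$, and build $Q$ from the same arrival Poisson processes $\{A_{0,j}\}$, $\{A_{1,j,k}\}$ and the same catastrophe process, with the convention that in $Q$ the customer playing the role of ``$l$'' is simply the initial customer $n_0$ shifted down. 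Because upward jumps of $Q_l$ and $Q$ obey the same rules, and because a catastrophe that would take $Q$ to or below $l$ is precisely the event that in the coupled picture hits customer $l$'s residual work, the two processes agree (after the shift by $l$) on the event that $Q$ never goes below $l$.

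Second, I would turn this into the stated identity by computing the relevant conditional laws. Writing the first equality: $P(Q_l(e_q)=k+l\mid Q_l(0)=l)$ should be matched with the conditional law of $Q(e_q)$ given $\{\inf_{0\le u\le e_q}Q(u)=l\}$. The subtlety is that in $Q_l$, reflection at $l$ is ``sticky'' in the sense that the server stops, whereas in $Q$ conditioned on the infimum being exactly $l$, time is spent at level $l$ only in the ordinary (non-sticky) way. I expect this is reconciled by the memorylessness of $e_q$: given that $Q$ has reached level $l$ at some time $\tau_l<e_q$ and, on $[\tau_l,e_q]$, never drops below $l$, the residual service of the customer then in position $l$ is irrelevant because the LCFS-PR server will not return to it before a further arrival; the post-$\tau_l$ evolution of $Q$ restricted to staying $\ge l$ has exactly the same transition mechanism as $Q_l$ away from $l$, and the time already spent is absorbed by the lack of memory of $e_q$. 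The second equality in the theorem is then purely formal: $Q(e_q)-\inf_{0\le u\le e_q}Q(u)=k$ is the same event as $Q(e_q)=k+l$ once we have conditioned on $\inf_{0\le u\le e_q}Q(u)=l$.

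Concretely, the steps I would carry out, in order, are: (i) set up the LCFS-PR description with residual service times so that both $Q$ and $Q_l$ are genuine stochastic recursive (Markov) systems, following the Brémaud-style construction referenced in the text; (ii) define the coupling of $Q$ and $Q_l$ on a common probability space via shared Poisson inputs, and identify the event $\{\inf_{0\le u\le e_q}Q(u)=l\}$ with a corresponding event for the coupled $Q_l$-path; (iii) show path-by-path that, after the spatial shift by $l$, the coupled processes coincide on $[0,e_q]$ on this event, up to the difference in how level-$l$ sojourns are generated; (iv) use the memorylessness of $e_q$ together with a first-passage decomposition at $\tau_l=\inf\{u:Q(u)=l\}$ to show that this difference does not affect the law of the state at time $e_q$; and (v) read off both equalities.

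The main obstacle I anticipate is step (iv): handling the ``stickiness'' mismatch at level $l$. In $Q_l$ the server idles at $l$ (customer $l$ persists forever), while in $Q$ conditioned on the infimum being $l$ the process may leave $l$ and return many times, and each visit to $l$ could, a priori, correspond to a different customer being at the bottom. I would address this by arguing that, under LCFS-PR, from the first time $Q$ hits $l$ the identity of the bottom customer is frozen on the conditioning event (any catastrophe removing that customer would push $Q$ strictly below $l$, contradicting the conditioning), so the bottom customer in $Q$ after $\tau_l$ really does behave like the permanent customer $l$ in $Q_l$; and the extra level-$l$ holding time in $Q$ versus the idling in $Q_l$ is reconciled by the standard competing-exponentials/memorylessness argument applied to $e_q$. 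Making this last reconciliation rigorous — rather than merely plausible — is where the real work lies, and I would likely phrase it via a renewal-type identity for $P(Q(e_q)\in\cdot\,,\ \inf_{[0,e_q]}Q=l)$ obtained by conditioning on $\tau_l$ and using the strong Markov property of the expanded (residual-time) chain.
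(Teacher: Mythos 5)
Your coupling set-up and several of the supporting observations are sound: driving $Q$ and $Q_{l}$ by the same Poisson inputs, noting that under LCFS-PR the customer at level $l$ is fresh at $\tau_{l}$ and receives no service before then, and reducing the problem via the strong Markov property of the expanded chain and memorylessness of $e_q$ to the case $Q(0)=l$, are all correct steps (and the second equality is indeed purely formal). The gap is your step (iv), and it is not a technical loose end but the entire content of the theorem. What the coupling gives you is the identity
\begin{eqnarray*}
P_{l}\bigl(Q(e_q)=k+l,\ Q(u)\ge l \ \forall u\le e_q\bigr) \;=\; P\bigl(Q_{l}(e_q)=k+l,\ A\bigr),
\end{eqnarray*}
where $A$ is the event that the coupled original process never dips below $l$. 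To obtain the theorem you must show that $Q_{l}(e_q)$ is \emph{independent} of $A$, i.e.\ that conditioning the reflected process on the coupled path never going below $l$ does not change the law of its position at $e_q$. This is precisely the factorization being proved; it fails at deterministic times, so it cannot follow from ``memorylessness plus strong Markov at $\tau_{l}$'' alone. Moreover, the reconciliation mechanism you invoke (competing exponentials at level $l$) is unavailable: customer $l$'s work is generally distributed, so the sojourn at $l$ in $Q$ is not ended by an exponential clock, and the conditioned process differs from $Q_{l}$ not only at level $l$ but also above it (catastrophes from $i>l$ that would overshoot below $l$ are suppressed under the conditioning but truncated to $l$ in $Q_{l}$), so the two path laws genuinely differ and no process-level identification is possible --- only the time-$e_q$ marginals coincide.

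The paper closes exactly this gap by a different mechanism: a sample-path last-entrance (upcrossing) decomposition of $\{Q(t)\ge k+l,\ \inf_{[0,t]}Q=l\}$ over the active arrival processes, converted via the Campbell--Mecke formula and a Palm-calculus lemma (Proposition \ref{ASTAprop}) into a closed linear system for the conditional pmf of $Q(e_q)$ given $\inf_{[0,e_q]}Q=l$; crucially, because the decomposition is taken at upward crossings and the LCFS-PR structure makes the ``stay above $k+l$'' probability a hitting-time tail $P(\tau_{k+l,k+l}>t-s)$ independent of the past, the resulting equations (\ref{infequations}) involve only arrival rates and downward hitting-time transforms from above, and contain no trace of the dynamics at the bottom level (customer $l$'s service distribution or $\delta_{l}$). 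One then checks that $Q_{l}$ started at $l$ satisfies the identical system and that the system has a unique probability solution. If you want to salvage your route, you would need to replace step (iv) by an argument of comparable strength --- for example a last-exit decomposition at the final visit to level $l$ before $e_q$ --- rather than an appeal to memorylessness.
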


\begin{proof} To help readers understand the proof, we break it up into three steps.

\noindent \textbf{Step 1}  We begin by presenting the following identity, which is satisfied by the sample paths of our PRP system:  for each $t \geq 0$, we see that for any two integers $k,l$ with $k \geq 1$, $l \leq n_0 = Q(0)$,
\begin{eqnarray} \label{mainindicator} & & \textbf{1}(Q(t) \geq k + l, \inf_{0 \leq u \leq t}Q(u) = l) \nonumber \\ &=& \int_{0}^{t}\textbf{1}(Q(s-) = k - 1 + l, \inf_{0 \leq u < s}Q(u) = l)\textbf{1}(\inf_{u \in [s,t]}Q(s) \geq k + l)A_{0,k-1+l}(ds) \nonumber \\ &+& \sum_{j=0}^{k-1}\sum_{m = k}^{\infty}\int_{0}^{t}\textbf{1}(Q(s-) = j + l, \inf_{0 \leq u < s}Q(u) = l)\textbf{1}(\inf_{u \in [s,t]} Q(u) \geq k + l)A_{1,j + l, m + l}(ds). \end{eqnarray}
The identity (\ref{mainindicator}) says that, in order that $Q(t) \geq k + l$, exactly one of two things must happen: if the infimum of the process over $[0,t]$ is $l$, either (i) there exists a time point $s \leq t$ such that $Q(s-) = k-1 + l$, $Q(s) = k + l$ (due to the arrival of a customer from $A_0$ at time $s$), and the process stays at or above level $k + l$ in $[s,t]$, giving the first term, or (ii) there exists a time point $s \leq t$ such that, due to a batch of customers arriving at time $s$ (which is contributed by $A_1$), the process crosses level $k + l$, reaching some level at or above $k + l$ at time $s$, and stays at or above $k + l$ during $[s,t]$, giving the second term.

After taking expected values of both sides of (\ref{mainindicator}), we get
\begin{eqnarray} \label{firstcalculation1} & & P(Q(t) \geq k + l, \inf_{0 \leq u \leq t}Q(u) = l) \nonumber \\ &=& E\left[\int_{0}^{t}\textbf{1}(Q(s-) = k-1 + l, \inf_{0 \leq u < s}Q(u) = l)\textbf{1}(\inf_{u \in [s,t]}Q(u) \geq k + l)A_{0,k-1 + l}(ds)\right] \nonumber \\ &+& \sum_{j=0}^{k-1}\sum_{m=k}^{\infty}E\left[\int_{0}^{t}\textbf{1}(Q(s-) = j + l, \inf_{0 \leq u < s}Q(u) = l)\textbf{1}(\inf_{u \in [s,t]}Q(u) \geq k + l)A_{1,j + l, m + l}(ds)\right]. \end{eqnarray}  We can use the Campbell-Mecke formula to evaluate the expected values found on the right-hand side of Equation (\ref{firstcalculation1}).  Notice first that
\begin{eqnarray*} & & E\left[\int_{0}^{t}\textbf{1}(Q(s-) = k-1 + l, \inf_{0 \leq u < s}Q(u) = l)\textbf{1}(\inf_{u \in [s,t]}Q(u) \geq k + l)A_{0,k-1 + l}(ds)\right] \\ &=& \lambda_{0,k-1+l}\int_{0}^{t}\mathcal{P}_{s}(Q(s-) = k-1 + l, \inf_{0 \leq u < s}Q(u) = l, \inf_{u \in [s,t]}Q(u) \geq k + l)ds, \end{eqnarray*} where $\mathcal{P}$ represents the Palm kernel induced by $A_{0,k-1 + l}$.  Furthermore, since the server processes work in a preemptive-resume manner, we can also use the Campbell-Mecke formula to establish that
\begin{eqnarray*} \label{goodpreemptiveresumecalc} & & \mathcal{P}_{s}(\inf_{u \in [s,t]}Q(u) \geq k + l,  Q(s-) = k-1 + l, \inf_{0 \leq u < s}Q(u) = l) \nonumber \\ &=& P(\tau_{k + l,k + l} > t-s)\mathcal{P}_{s}(Q(s-) = k-1 + l, \inf_{0 \leq u < s}Q(u) = l) \end{eqnarray*} where $\tau_{k,j}$ is the amount of time it takes the PRP system to go below state $j$, starting from state $k$, $j \leq k$, where all customers labeled $j,j+1, \ldots, k$ have not yet received any attention from the server.  Moreover, if we let $\{\mathcal{F}_{t}; t \geq 0\}$ represent the minimal filtration induced by $Q$ and our arrival and catastrophe processes, we see that the event $\{Q(s-) = k-1 + l, \inf_{0 \leq u < s}Q(u) = l\} \in \mathcal{F}_{s-}$, and so Proposition \ref{ASTAprop} in the Appendix yields
\begin{eqnarray*} \mathcal{P}_{s}(Q(s-) = k-1 + l, \inf_{0 \leq u < s}Q(u) = l) = P(Q(s) = k-1 + l, \inf_{0 \leq u \leq s}Q(u) = l). \end{eqnarray*}  An analogous argument can be used to evaluate the second type of expectation found in (\ref{firstcalculation1}).  Plugging these expressions into (\ref{firstcalculation1}) gives
\begin{eqnarray} \label{firstcalculation2} & & P(Q(t) \geq k + l, \inf_{0 \leq u \leq t}Q(u) = l) = \lambda_{0, k-1 + l}\int_{0}^{t}P(Q(s) = k-1 + l, \inf_{0 \leq u \leq s}Q(u) = l)P(\tau_{k + l,k + l} > t-s)ds \nonumber \\ &+& \sum_{j=0}^{k-1}\sum_{m=k}^{\infty}\lambda_{1,j + l}P(Z_{1,j + l} = m-j)\int_{0}^{t}P(\tau_{m + l, k + l} > t-s)P(Q(s-) = j + l, \inf_{0 \leq u \leq s}Q(u) = l)ds. \end{eqnarray}  After integrating both sides of (\ref{firstcalculation2}) with respect to an exponential density with rate $q > 0$, we get
\begin{eqnarray*} & & P(Q(e_q) \geq k + l, \inf_{0 \leq u \leq e_q} Q(u) = l) = \lambda_{0,k-1 + l}\frac{(1 - \phi_{k + l,k + l}(q))}{q}P(Q(e_q) = k-1 + l, \inf_{0 \leq u \leq e_q}Q(u) = l) \\  &+& \sum_{j=0}^{k-1}\sum_{m=k}^{\infty}\lambda_{1,j + l} P(Z_{1,j + l} = m-j) \frac{(1 - \phi_{m + l, k + l}(q))}{q}P(Q(e_q) = j + l, \inf_{0 \leq u \leq e_q}Q(u) = l) \end{eqnarray*} where $\phi_{m + l,k + l}$ represents the Laplace-Stieltjes transform of $\tau_{m + l,k + l}(0)$ (with $Q(0) = m + l$).  Dividing by $P(\inf_{0 \leq u \leq e_q}Q(u) = l)$ finally yields
\begin{eqnarray} \label{infequations} & & P(Q(e_q) \geq k + l \mid \inf_{0 \leq u \leq e_q} Q(u) = l) = \lambda_{0,k-1 + l}\frac{(1 - \phi_{k + l,k + l}(q))}{q}P(Q(e_q) = k-1 + l \mid \inf_{0 \leq u \leq e_q}Q(u) = l) \nonumber \\  &+& \sum_{j=0}^{k-1}\sum_{m=k}^{\infty}\lambda_{1,j + l} P(Z_{1,j+l} = m - j) \frac{(1 - \phi_{m + l, k + l}(q))}{q}P(Q(e_q) = j + l \mid \inf_{0 \leq u \leq e_q}Q(u) = l). \end{eqnarray}

\noindent \textbf{Step 2} We now show that the system of equations (\ref{infequations}) has a unique solution.  Notice that for a fixed integer $l$, these equations can be iteratively solved, since
\begin{eqnarray*} \sum_{k=0}^{\infty}P(Q(e_q) = k + l \mid \inf_{0 \leq u \leq e_q}Q(u) = l) = 1.  \end{eqnarray*}  Indeed, notice that
\begin{eqnarray*} & & 1 - P(Q(e_q) = l \mid \inf_{0 \leq u \leq e_q}Q(u) = l) = P(Q(e_q) \geq l + 1 \mid \inf_{0 \leq u \leq e_q}Q(s) = l) \\ &=& \lambda_{0,l}\frac{(1 - \phi_{l+1,l+1}(q))}{q}P(Q(e_q) = l \mid \inf_{0 \leq u \leq e_q}Q(u) = l) \nonumber \\  &+& \sum_{m=1}^{\infty}\lambda_{1,l} P(Z_{1,l} = m) \frac{(1 - \phi_{m + l, 1 + l}(q))}{q}P(Q(e_q) = l \mid \inf_{0 \leq u \leq e_q}Q(u) = l) \end{eqnarray*} which allows us to determine $P(Q(e_q) = l \mid \inf_{0 \leq u \leq e_q}Q(u) = l)$, and all other probabilities can be determined in a similar, iterative manner.  Hence, there is a unique probability measure on the integers that satisfies these equations.

\noindent \textbf{Step 3} By precisely the same arguments, we see that the $Q_{l}$ process satisfies the same system of equations.  Indeed, when $Q_{l}(0) = l$,
\begin{eqnarray*} & & P(Q_{l}(e_q) \geq k + l) = \lambda_{0, k + l - 1}\frac{1 - \phi_{k+l,k+l}(q)}{q}P(Q_{l}(e_q) = k - 1 + l) \\ &+& \sum_{j=0}^{k-1}\sum_{m=k}^{\infty}\lambda_{1,l + j}P(Z_{1,l + j} = m - j)\frac{1 - \phi_{m + l, k + l}(q)}{q}P(Q_{l}(e_q) = j + l). \end{eqnarray*}  Thus, we see that
\begin{eqnarray*} P(Q_{l}(e_q) = k + l \mid Q_{l}(0) = l) = P(Q(e_q) = k + l \mid \inf_{0  \leq s \leq e_q}Q(s) = l) \end{eqnarray*} completing the proof. \end{proof}

\begin{remark} It is worth noting, from the point of view of numerical transform inversion \cite{AbateWhittInversion}, that a similar result can be derived when we consider complex-valued $q$, i.e. expressions of the form
\begin{eqnarray*} \int_{0}^{\infty}P(Q(t) = k + l, \inf_{0 \leq s \leq t}Q(s) = l)qe^{-qt}dt \end{eqnarray*} for complex $q$ with positive real part, i.e. those $q$ satisfying $\Re(q) > 0$, as opposed to $P(Q(e_q) = k + l, \inf_{0 \leq s \leq e_q}Q(s) = l)$ for real $q > 0$.
First note that for $q = x + iy$ satisfying $\Re(q) = x > 0$, with $e_{x}$ being exponential with rate $x$, independent of $Q$,
\begin{eqnarray*} \int_{0}^{\infty}P(Q(t) = k + l, \inf_{0 \leq s \leq t}Q(s) = l)qe^{-qt}dt &=& \int_{0}^{\infty}P(Q(t) = k + l, \inf_{0 \leq s \leq t}Q(s) = l)(x + iy)e^{-iyt}e^{-xt}dt \\ &=& \frac{(x + iy)}{x}E[\textbf{1}(Q(e_{x}) = k + l, \inf_{0 \leq s \leq e_{x}}Q(s) = l)e^{-iye_{x}}]. \end{eqnarray*}  Using this observation, we can mimic the proof of Theorem \ref{mainresult} in a straightforward manner to determine that
\begin{eqnarray*} & & E[\textbf{1}(Q(e_{x}) = k+l)e^{-iye_{x}} \mid \inf_{0 \leq s \leq e_{x}}Q(s) = l, Q(0) = n_0] \\ &=& \frac{E[e^{-iye_{x}}\textbf{1}(\inf_{0 \leq s \leq e_{x}}Q(s) = l) \mid Q(0) = n_0]E[\textbf{1}(Q_{l}(e_{x}) = k + l)e^{-iye_{x}} \mid Q_{l}(0) = l]}{P(\inf_{0 \leq s \leq e_{x}}Q(s) = l \mid Q(0) = n_{0})}\frac{x + iy}{x} \end{eqnarray*} which contains quantities that are given in terms of either the reflection $Q_{l}$ reflected at $l$, or hitting-time transforms associated with the original process $Q$.  To see why only these types of transforms need to be computed, note that letting $\tau_{l} = \inf\{t \geq 0:  Q(t) \leq l\}$ yields
\begin{eqnarray*} E[e^{-iye_{x}}\textbf{1}(\inf_{0 \leq u \leq e_{x}}Q(u) = l) \mid Q(0) = n_0] &=& E[e^{-iye_{x}}\textbf{1}(\inf_{0 \leq u \leq e_{x}}Q(u) \leq l) \mid Q(0) = n_{0}] \\ &-& E[e^{-iye_{x}}\textbf{1}(\inf_{0 \leq u \leq e_{x}}Q(u) \leq l-1) \mid Q(0) = n_0] \\ &=& E[e^{-iye_{x}}\textbf{1}(\tau_{l} \leq e_{x}) \mid Q(0) = n_{0}] \\ &-& E[e^{-iye_{x}}\textbf{1}(\tau_{l-1} \leq e_{x}) \mid Q(0) = n_0] \\ &=& \frac{x}{x + iy}E[e^{-iy\tau_{l}}\textbf{1}(\tau_{l} \leq e_{x}) \mid Q(0) = n_0] \\ &-& \frac{x}{x + iy}E[e^{-iy \tau_{l-1}}\textbf{1}(\tau_{l-1} \leq e_{x}) \mid Q(0) = n_0] \\ &=& \frac{x}{x + iy}\left[E[e^{-q\tau_{l}} \mid Q(0) = n_0] - E[e^{-q \tau_{l-1}} \mid Q(0) = n_0]\right]  \end{eqnarray*}  This gives
\begin{eqnarray*} & & E[\textbf{1}(Q(e_{x}) = k + l)e^{-iye_{x}} \mid \inf_{0 \leq s \leq e_{x}}Q(s) = l, Q(0) = n_0] \\ &=& \frac{\left[E[e^{-q\tau_{l}} \mid Q(0) = n_0] - E[e^{-q \tau_{l-1}} \mid Q(0) = n_0]\right]}{\left[E[e^{-x\tau_{l}} \mid Q(0) = n_0] - E[e^{-x \tau_{l-1}} \mid Q(0) = n_0]\right]}E[\textbf{1}(Q_{l}(e_{x}) = k + l)e^{-iye_{x}} \mid Q_{l}(0) = l] \end{eqnarray*} implying
\begin{eqnarray*} & & \int_{0}^{\infty}P(Q(t) = k + l, \inf_{0 \leq s \leq t}Q(s) = l \mid Q(0) = n_0)qe^{-qt}dt \\ &=& \left[E[e^{-q\tau_{l}} \mid Q(0) = n_0] - E[e^{-q \tau_{l-1}} \mid Q(0) = n_0]\right] \int_{0}^{\infty}P(Q_{l}(t) = k+l \mid Q_{l}(0) = l)qe^{-qt}dt \end{eqnarray*} which is clearly the complex analogue of the formula given in Theorem \ref{mainresult}.  All other types of transforms that we will need can be computed in a similar manner, for complex $q$.
\end{remark}

We now show that the reflected process $\{Q_{0}(t); t \geq 0\}$ exhibits a similar type of factorization identity.
\begin{theorem} \label{mainreflectedresult} Suppose $Q$ is a PRP system with $Q(0) = n_0$, and let $Q_{0}$ be the reflected version of $Q$ at level zero, with $Q_{0}(0) = n_0$.   Then for each integer $l \geq 0$, and each integer $k \geq 1$,
\begin{eqnarray*} P(Q(e_q) - \inf_{0 \leq u \leq e_q}Q(u) = k \mid \inf_{0 \leq u \leq e_q}Q(u) = l) &=& P(Q_{0}(e_q) - \inf_{0 \leq u \leq e_q}Q_{0}(u) = k \mid \inf_{0 \leq u \leq e_q}Q_{0}(u) = l). \end{eqnarray*} \end{theorem}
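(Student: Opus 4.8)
The plan is to reduce Theorem \ref{mainreflectedresult} to the uniqueness statement established in Step 2 of the proof of Theorem \ref{mainresult}. First note that on the event $\{\inf_{0\le u\le e_q}Q(u)=l\}$ one has $Q(e_q)-\inf_{0\le u\le e_q}Q(u)=Q(e_q)-l$, so the left-hand side of the theorem equals $P(Q(e_q)=k+l\mid \inf_{0\le u\le e_q}Q(u)=l)$; similarly the right-hand side equals $P(Q_{0}(e_q)=k+l\mid \inf_{0\le u\le e_q}Q_{0}(u)=l)$, and on $\{\inf_{0\le u\le e_q}Q_{0}(u)=l\}$ we have $Q_{0}(e_q)\ge l$, so these conditional probabilities sum to one over $k\ge 0$. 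By Step 1 of the proof of Theorem \ref{mainresult} the sequence $\{P(Q(e_q)=k+l\mid \inf_{0\le u\le e_q}Q(u)=l)\}_{k\ge 0}$ satisfies the linear system (\ref{infequations}), and by Step 2 this system has exactly one probability solution on $\{0,1,2,\dots\}$. Hence it suffices to show that $\{P(Q_{0}(e_q)=k+l\mid \inf_{0\le u\le e_q}Q_{0}(u)=l)\}_{k\ge 0}$ satisfies the same system, with the same coefficients $\lambda_{0,\cdot}$, $\lambda_{1,\cdot}$, the batch laws $P(Z_{1,\cdot}=\cdot)$, and the transforms $\phi_{\cdot,\cdot}(q)$.

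To verify this I would re-run Step 1 of the proof of Theorem \ref{mainresult} with $Q_{0}$ in place of $Q$, keeping $k\ge 1$ and $l\ge 0$. The sample-path identity (\ref{mainindicator}) remains valid for $Q_{0}$: it only records the last time the process jumps up across level $k+l$, either through an $A_{0,k+l-1}$-point or through an $A_{1,j+l,m+l}$-batch with $j<k\le m$, and $Q_{0}$ has exactly the same upward-jump mechanism as $Q$, driven by the same Poisson processes. Consequently the Campbell-Mecke computations are applied to these identical point processes with identical Palm kernels, and Proposition \ref{ASTAprop} applies to the $\mathcal{F}_{s-}$-measurable events $\{Q_{0}(s-)=j+l,\ \inf_{0\le u<s}Q_{0}(u)=l\}$ exactly as before. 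The only coefficient whose invariance must actually be argued is $\phi_{m+l,k+l}(q)$, the Laplace-Stieltjes transform of the first-passage time $\tau_{m+l,k+l}$ below level $k+l$: since $k+l\ge 1$, the process stays at levels $\ge k+l\ge 1$ on all of $[0,\tau_{m+l,k+l})$, hence never visits level $0$, so the level-zero reflecting modification of $Q_{0}$ is never triggered on that excursion; moreover any catastrophe that would push the process to level $0$ or below necessarily also brings it below $k+l$ and terminates $\tau$, so the landing convention for $Q_{0}$ is immaterial. Coupling $Q$ and $Q_{0}$ on this excursion yields the same $\phi_{m+l,k+l}(q)$ for both.

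Combining these observations, $\{P(Q_{0}(e_q)=k+l\mid \inf_{0\le u\le e_q}Q_{0}(u)=l)\}_{k\ge 0}$ is a probability measure on $\{0,1,2,\dots\}$ satisfying (\ref{infequations}), so by the uniqueness in Step 2 of the proof of Theorem \ref{mainresult} it coincides with $\{P(Q(e_q)=k+l\mid \inf_{0\le u\le e_q}Q(u)=l)\}_{k\ge 0}$; undoing the reduction of the first paragraph gives the asserted identity. I expect the main obstacle to be exactly the bookkeeping of the second paragraph --- verifying that none of the level-dependent data defining the linear system is disturbed by the reflecting boundary at level zero once attention is restricted to levels $\ge l+1\ge 1$. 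Everything else is a transcription of the proof of Theorem \ref{mainresult}, of the same kind carried out there in Step 3 when passing to the process $Q_{l}$.
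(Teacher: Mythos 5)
Your proposal is correct and follows essentially the same route as the paper: derive the analogue of the sample-path identity (\ref{mainindicator}) for $Q_{0}$, push it through the Campbell--Mecke/Palm machinery to obtain the system (\ref{secondinfequations}), observe it coincides with (\ref{infequations}), and invoke the uniqueness from Step 2 of Theorem \ref{mainresult}. Your explicit check that the transforms $\phi_{m+l,k+l}(q)$ are unaffected by the reflecting barrier at zero (since $k+l\geq 1$ and any catastrophe reaching level $\leq 0$ already terminates the passage time) is a detail the paper leaves implicit, but it does not change the argument.
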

\begin{proof} Notice that a sample-path identity that is completely analogous to (\ref{mainindicator}) can be established for $Q_{0}$:  for each $l \geq 0$, $k \geq 1$,
\begin{eqnarray} \label{secondmainindicator} & & \textbf{1}(Q_{0}(t) \geq k + l, \inf_{0 \leq u \leq t}Q_{0}(u) = l) \nonumber \\ &=& \int_{0}^{t}\textbf{1}(Q_{0}(s-) = k-1 + l, \inf_{0 \leq u \leq s}Q_{0}(u) = l)\textbf{1}(\inf_{u \in [s,t]}Q_{0}(u) = k + l)A_{0,k-1 + l}(ds) \nonumber \\ &+& \sum_{j=0}^{k-1}\sum_{m = k}^{\infty}\int_{0}^{t}\textbf{1}(Q_{0}(s-) = j + l, \inf_{0 \leq u < s}Q_{0}(u) = l)\textbf{1}(\inf_{u \in [s,t]}Q_{0}(u) \geq k + l)A_{1,j + l, m + l}(ds). \end{eqnarray}
\noindent Applying the same steps found in Step 1 of the proof of Theorem \ref{mainresult} yields
\begin{eqnarray} \label{secondinfequations} & & P(Q_{0}(e_q) \geq k + l \mid \inf_{0 \leq u \leq e_q} Q_{0}(u) = l) = \lambda_{0,k-1 + l}\frac{(1 - \phi_{k + l,k + l}(q))}{q}P(Q_{0}(e_q) = k-1 + l \mid \inf_{0 \leq u \leq e_q}Q_{0}(u) = l) \nonumber \\  &+& \sum_{j=0}^{k-1}\sum_{m=k}^{\infty}\lambda_{1,j + l} P(Z_{1,j + l} = m - j) \frac{(1 - \phi_{m + l, k + l}(q))}{q}P(Q_{0}(e_q) = j + l \mid \inf_{0 \leq u \leq e_q}Q_{0}(u) = l). \end{eqnarray}  For our fixed $l$, we notice that the equations that form system (\ref{infequations}) are the same as the equations found in (\ref{secondinfequations}).  Hence, by the uniqueness result proven in Step 2 of Theorem \ref{mainresult} we have
\begin{eqnarray*} P(Q(e_q) \geq k + l \mid \inf_{0 \leq u \leq e_q}Q(u) = l) = P(Q_{0}(e_q) \geq k + l \mid \inf_{0 \leq u \leq e_q} Q_{0}(u) = l) \end{eqnarray*}  which completes the proof. \end{proof}

Two interesting factorization results can be derived, when the batch and catastrophe sizes of both $Q$ and $Q_0$ have distributions that are state-independent.
Clearly, in this case we see that for each $k \geq 0$ and $l$, $P(Q_{l}(e_q) = k + l \mid Q_{l}(0) = l) = P(Q_{0}(e_q) = k \mid Q_{0}(0) = 0)$, and since $Q_{0}$ is the reflection of $Q$ at level 0, we also find that
\begin{eqnarray*}Q_{0}(e_q) \stackrel{d}{=} Q(e_q) - \inf_{0 \leq u \leq e_q}Q(u) \end{eqnarray*} which follows since customers are processed in a Last-Come-First-Served Preemptive-Resume manner.  Hence, Theorem \ref{mainresult} yields for each $k \geq 0$, $l \leq 0 = Q(0)$,
\begin{eqnarray*} P(Q(e_q) - \inf_{0 \leq u \leq e_q}Q(u) = k) = P(Q(e_q) - \inf_{0 \leq u \leq e_q}Q(u) = k \mid \inf_{0 \leq u \leq e_q}Q(u) = l). \end{eqnarray*}  In other words, the following corollary holds.

\begin{corollary} \label{PRPcorollary}  Suppose that $\{Q(t); t \geq 0\}$ represents a PRP system, with state-independent jumps, and let $e_q$ be an exponential random variable with rate $q > 0$, independent of $Q$.  Then for each $\omega \in \mathbb{R}$,
\begin{eqnarray*} E_{0}[e^{i \omega Q(e_q)}] = E_{0}[e^{i \omega \inf_{0 \leq u \leq e_q}Q(u)}]E_{0}[e^{i \omega (Q(e_q) - \inf_{0 \leq u \leq e_q}Q(u))}]. \end{eqnarray*} \end{corollary}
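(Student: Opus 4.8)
The plan is to obtain the product form by conditioning on the running infimum $M := \inf_{0 \le u \le e_q}Q(u)$ and invoking the consequence of Theorem \ref{mainresult} recorded in the display just above the corollary: when the jumps are state-independent and $Q(0)=0$, the conditional law of the reflected quantity $R := Q(e_q) - M$ given $\{M = l\}$ does not depend on $l$, and hence coincides with the unconditional law of $R$. This is precisely the statement that $R$ and $M$ are independent, and the asserted identity is then nothing more than the factorization of the characteristic function of a sum of two independent random variables, $Q(e_q) = M + R$.

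In more detail, write $E_0$, $P_0$ for expectation and probability under $Q(0)=0$. Since $Q(0)=0$, $M$ takes values in $\{0,-1,-2,\ldots\}$ and $R$ in $\{0,1,2,\ldots\}$, and (the process being non-explosive, as is implicit throughout) $M$ is almost surely finite, so $\sum_{l \le 0} P_0(M = l) = 1$. Conditioning on $M$,
\[
E_0\big[e^{i\omega Q(e_q)}\big] \;=\; E_0\big[e^{i\omega M}e^{i\omega R}\big] \;=\; \sum_{l \le 0} e^{i\omega l}\,P_0(M = l)\,E_0\big[e^{i\omega R}\,\big|\,M = l\big].
\]
By the display preceding the corollary, $P_0(R = k \mid M = l) = P_0(R = k)$ for every $k \ge 0$ and every $l \le 0$; multiplying by $e^{i\omega k}$ and summing over $k$ (the series converges absolutely, each term being bounded in modulus by $P_0(R=k)$) gives $E_0[e^{i\omega R}\mid M = l] = E_0[e^{i\omega R}]$, independent of $l$. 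Pulling this factor out of the sum yields
\[
E_0\big[e^{i\omega Q(e_q)}\big] \;=\; E_0\big[e^{i\omega R}\big]\sum_{l \le 0} e^{i\omega l}\,P_0(M = l) \;=\; E_0\big[e^{i\omega M}\big]\,E_0\big[e^{i\omega R}\big],
\]
which is the claimed identity after substituting back $M = \inf_{0 \le u \le e_q}Q(u)$ and $R = Q(e_q) - \inf_{0 \le u \le e_q}Q(u)$.

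There is no serious obstacle here: the entire probabilistic content sits in Theorem \ref{mainresult} together with the two structural observations recorded just before the corollary --- that state-independence of the jumps makes $Q_l(e_q) - l$ equal in law to $Q_0(e_q)$, and that the LCFS preemptive-resume discipline makes $Q_0(e_q)$ equal in law to $Q(e_q) - \inf_{0 \le u \le e_q}Q(u)$. The only steps needing a word of justification are the almost-sure finiteness of $M$ (so that the conditioning and the sum over $l \le 0$ are legitimate) and the termwise manipulation of the resulting absolutely convergent series; both are routine. Equivalently, one may simply record that the conditional law of $R$ given $M$ is a.s. a constant probability measure, not depending on the value of $M$, conclude from this that $M$ and $R$ are independent, and then quote the standard multiplicativity of characteristic functions over independent summands.
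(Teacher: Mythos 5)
Your proposal is correct and follows essentially the same route as the paper: the entire content is the observation (recorded in the paper immediately before the corollary, via Theorem \ref{mainresult}, state-independence of the jumps, and the LCFS preemptive-resume identification of $Q_0(e_q)$ with $Q(e_q)-\inf_{0\le u\le e_q}Q(u)$) that the conditional law of the reflected quantity given $\inf_{0\le u\le e_q}Q(u)=l$ does not depend on $l$, whence independence and the product form of the characteristic function. You merely spell out the conditioning sum and the absolute-convergence bookkeeping that the paper leaves implicit, which is fine.
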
  Here $E_{x}$ is the expectation corresponding to $P_{x}$, where $P_{x}$ is a probability measure under the condition that our process starts at level $x$.  This notation will be used in many places throughout the rest of the paper.

This factorization has been well-known for L\'evy processes since the late 60's, due to Percheskii and Rogozin \cite{PercheskiiRogozin}, and the first probabilistic proof of this result was given in Greenwood and Pitman \cite{GreenwoodPitman}.

We can also conclude from Theorem \ref{mainreflectedresult} that for $l \geq 0$, when $Q_{0}(0) = Q(0) = n_0$,
\begin{eqnarray*} P(Q_{0}(e_q) - \inf_{0 \leq u \leq e_q}Q_{0}(u) = k \mid \inf_{0 \leq u \leq e_q}Q_{0}(u) = l) &=& P(Q(e_q) - \inf_{0 \leq u \leq e_q}Q(u) = k) \\ &=& P(Q_{0}(e_q) - \inf_{0 \leq u \leq e_q}Q_{0}(u) = k) \end{eqnarray*} where the second equality follows from the simple fact that the reflection of $Q_0$ at its infimum is equal in distribution to the reflection of $Q$ at its infimum.  Hence, we see that $Q_{0}(e_q) - \inf_{0 \leq u \leq e_q}Q_{0}(u)$ is actually independent of  $\inf_{0 \leq u \leq e_q}Q_{0}(u)$, which gives us another interesting corollary.

\begin{corollary} \label{reflectedPRPcorollary} Suppose that $\{Q_{0}(t); t \geq 0\}$ is a reflected version of our PRP system, reflected at 0.  Then for each $\omega \in \mathbb{R}$, and each integer $n_0 \geq 0$,
\begin{eqnarray*} E_{n_0}[e^{i \omega Q_{0}(e_q)}] = E_{n_0}[e^{i \omega \inf_{0 \leq u \leq e_q}Q_{0}(u)}]E_{0}[e^{i \omega Q_{0}(e_q)}]. \end{eqnarray*} \end{corollary}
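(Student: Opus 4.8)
The plan is to obtain the product formula directly from the independence property established in the discussion immediately preceding the corollary: under $P_{n_0}$, the random variable $Q_{0}(e_q) - \inf_{0 \leq u \leq e_q}Q_{0}(u)$ is independent of $\inf_{0 \leq u \leq e_q}Q_{0}(u)$. So almost all the real work has already been done, and the corollary is just a repackaging of that fact in terms of characteristic functions, together with one identification of a marginal law.

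First I would write the trivial pathwise decomposition
$Q_{0}(e_q) = \inf_{0 \leq u \leq e_q}Q_{0}(u) + \big(Q_{0}(e_q) - \inf_{0 \leq u \leq e_q}Q_{0}(u)\big)$,
multiply by $i\omega$, exponentiate, and take $E_{n_0}$. Since the two summands on the right-hand side are independent under $P_{n_0}$, the expectation of the product of the corresponding exponentials factors, giving
\[
E_{n_0}[e^{i\omega Q_{0}(e_q)}] = E_{n_0}\big[e^{i\omega \inf_{0 \leq u \leq e_q}Q_{0}(u)}\big]\, E_{n_0}\big[e^{i\omega (Q_{0}(e_q) - \inf_{0 \leq u \leq e_q}Q_{0}(u))}\big].
\]
This already exhibits the first factor in the statement, so it remains only to identify the second factor with $E_{0}[e^{i\omega Q_{0}(e_q)}]$.

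For that identification I would invoke the same reasoning used just before the corollary: the reflection of $Q_{0}$ at its running infimum is equal in distribution to the reflection of $Q$ at its running infimum, and, because the jump sizes are state-independent and service is Last-Come-First-Served Preemptive-Resume, the law of $Q(e_q) - \inf_{0 \leq u \leq e_q}Q(u)$ does not depend on the starting level. In particular it coincides with the law of $Q_{0}(e_q)$ under $P_{0}$, since when $Q_{0}$ starts at $0$ its running infimum is identically zero and $Q_{0}(e_q) - \inf_{0 \leq u \leq e_q}Q_{0}(u) = Q_{0}(e_q)$. Substituting $E_{n_0}\big[e^{i\omega (Q_{0}(e_q) - \inf_{0 \leq u \leq e_q}Q_{0}(u))}\big] = E_{0}[e^{i\omega Q_{0}(e_q)}]$ into the displayed factorization completes the proof for every $\omega \in \mathbb{R}$ and every integer $n_0 \geq 0$.

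The only delicate point — and the one I would spell out most carefully — is the distributional identity $Q_{0}(e_q) - \inf_{0 \leq u \leq e_q}Q_{0}(u) \stackrel{d}{=} Q_{0}(e_q)$ under $P_{0}$, that is, the claim that reflecting the already-reflected process $Q_{0}$ started from $n_0$ a second time at its running infimum erases the dependence on $n_0$. This rests on the pathwise observation that above level $0$ the reflected system $Q_{0}$ evolves exactly like the free system $Q$, so subtracting the running infimum is the same operation as it would be for $Q$, and with state-independent jumps that operation yields a process whose time-$e_q$ marginal is insensitive to the initial state. Since this is essentially the argument already given in the paragraph preceding the corollary, the proof is short once that observation is in place.
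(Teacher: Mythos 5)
Your proof is correct and follows essentially the same route as the paper: the corollary is precisely the repackaging, via the pathwise decomposition $Q_{0}(e_q)=\inf_{0\leq u\leq e_q}Q_{0}(u)+\bigl(Q_{0}(e_q)-\inf_{0\leq u\leq e_q}Q_{0}(u)\bigr)$ and characteristic functions, of the independence established in the paragraph preceding the corollary. Your identification of the second factor with $E_{0}[e^{i\omega Q_{0}(e_q)}]$ (state-independent jumps, reflection of $Q_0$ at its infimum agreeing in law with that of $Q$, and the trivial infimum when starting from $0$) is exactly the argument the paper relies on in its discussion before Corollaries 3.1 and 3.2.
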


Such a factorization result is useful when studying reflected processes starting in an arbitrary initial state.  Corollary \ref{PRPcorollary} shows that, since $\inf_{0 \leq u \leq e_q}Q(u)$ is independent of $Q(e_q) - \inf_{0  \leq u \leq e_q}Q(u)$, the transforms of $Q(e_q)$ and $\inf_{0 \leq u \leq e_q}Q(u)$ can be used to derive the transform of $Q(e_q) - \inf_{0 \leq u \leq e_q}Q(u)$, which represents the distribution of the reflected process, starting in level zero.  Theorem \ref{mainreflectedresult} can then be used to find the distribution of the reflected process, starting in any initial state, since it is clearly equal in distribution to a convolution of the reflected PRP system $Q_0$ starting in level zero, and a truncated version of $\inf_{0 \leq u \leq e_q}Q(u)$.

We are now ready to see how the Wiener-Hopf factorization for L\'evy processes follows as a consequence of our factorization identities for PRP systems, whose arrival rates, service rates, and jump distributions do not depend on the level of the process.

\subsection{The Wiener-Hopf factorization}
We begin with establishing the well-known version of the Wiener-Hopf factorization, for L\'evy processes.
\begin{theorem} \label{classicalLevyThm} Suppose $X$ is a L\'evy process, and let $e_q$ be an exponential random variable, independent of $X$, with rate $q > 0$.  Then $\inf_{0 \leq s \leq e_q}X(s)$ and $X(e_q) - \inf_{0 \leq s \leq e_q}X(s)$ are independent. \end{theorem}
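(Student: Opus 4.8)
The plan is to obtain the L\'evy-process factorization as a limiting case of Corollary \ref{PRPcorollary}. Fix a L\'evy process $X$ with $X(0) = 0$. First I would construct a sequence of PRP systems $\{Q^{(n)}(t); t \geq 0\}$, each with state-independent arrival rates, service rates and jump distributions and each started from $Q^{(n)}(0) = 0$, chosen so that, after the appropriate spatial rescaling, $Q^{(n)} \Rightarrow X$ in the Skorokhod space $D[0,\infty)$ equipped with the $J_1$ topology. This is exactly the approximation scheme announced in Section 2: the single arrivals $\{A_{0,j}\}_j$ together with the service completions are tuned so that the birth--death part of $Q^{(n)}$ converges to the drift-plus-Brownian component of $X$, while the batch arrivals $\{A_{1,j,k}\}$ and the catastrophes are tuned so that their combined contribution converges to the jump component of $X$; I would invoke the construction carried out later in the paper. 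Since a deterministic time change sends a PRP system to another PRP system (with all rates rescaled accordingly), we may absorb the time scaling so that each $Q^{(n)}$ is a genuine PRP system with state-independent jumps to which Corollary \ref{PRPcorollary} applies at the fixed rate $q$.

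For each $n$, Corollary \ref{PRPcorollary} then gives
\begin{eqnarray*} E_{0}[e^{i\omega Q^{(n)}(e_q)}] &=& E_{0}[e^{i\omega \inf_{0 \leq u \leq e_q}Q^{(n)}(u)}]\,E_{0}[e^{i\omega (Q^{(n)}(e_q) - \inf_{0 \leq u \leq e_q}Q^{(n)}(u))}], \end{eqnarray*}
and, running the same argument with two separate frequencies, the stronger statement that the pair $\big(\inf_{0 \leq u \leq e_q}Q^{(n)}(u),\, Q^{(n)}(e_q) - \inf_{0 \leq u \leq e_q}Q^{(n)}(u)\big)$ has independent coordinates. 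Now pass to the limit. Because $e_q$ is independent of $X$ and $X$ has no fixed times of discontinuity, $X$ is almost surely continuous at $e_q$; and on paths that are continuous at $e_q$ the functional $\xi \mapsto \big(\inf_{0 \leq u \leq e_q}\xi(u),\, \xi(e_q) - \inf_{0 \leq u \leq e_q}\xi(u)\big)$ is continuous in the $J_1$ topology. Hence the continuous-mapping theorem, applied to the jointly converging pair $(Q^{(n)}, e_q) \Rightarrow (X, e_q)$ with $e_q$ the common independent clock, yields
\begin{eqnarray*} \Big(\inf_{0 \leq u \leq e_q}Q^{(n)}(u),\, Q^{(n)}(e_q) - \inf_{0 \leq u \leq e_q}Q^{(n)}(u)\Big) &\Rightarrow& \Big(\inf_{0 \leq s \leq e_q}X(s),\, X(e_q) - \inf_{0 \leq s \leq e_q}X(s)\Big). \end{eqnarray*}

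Finally I would invoke the elementary fact that independence of coordinates is preserved under weak convergence: if $(U_n, V_n) \Rightarrow (U,V)$ and $U_n \perp V_n$ for every $n$, then letting $n \to \infty$ in $E[e^{i\omega U_n + i\eta V_n}] = E[e^{i\omega U_n}]E[e^{i\eta V_n}]$ shows that the joint characteristic function of $(U,V)$ factors, so $U \perp V$. Applied to the convergence just displayed, this gives the independence of $\inf_{0 \leq s \leq e_q}X(s)$ and $X(e_q) - \inf_{0 \leq s \leq e_q}X(s)$, as claimed. I expect the main obstacle to be the first step --- exhibiting the family of state-independent PRP systems and proving the Skorokhod convergence $Q^{(n)} \Rightarrow X$ for a general L\'evy process, in particular building the small-jump (and compensated-drift) part of $X$ out of the batch-arrival and catastrophe mechanisms --- together with the more routine but still delicate verification that the infimum-and-reflection functional is almost surely continuous at the independent random time $e_q$, which is precisely what licenses the continuous-mapping step.
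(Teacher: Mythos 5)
Your strategy is the same as the paper's in spirit: approximate $X$ by PRP systems with state-independent jumps, apply Corollary \ref{PRPcorollary} (in its two-frequency form, which the underlying Theorem \ref{mainresult} does justify, since the conditional law of $Q(e_q)-\inf_{0\le u\le e_q}Q(u)$ given $\inf_{0\le u\le e_q}Q(u)=l$ does not depend on $l$), and pass to the limit. The difference is in how the limit is organized, and this is where your plan has a real gap. The paper never constructs PRP systems converging to a \emph{general} L\'evy process: the construction it cites (and defers to \cite{FralixvanLeeuwaardenBoxma}) only produces PRP approximations, converging uniformly on compacts, of a L\'evy process consisting of a Brownian part plus a \emph{compound Poisson} part; the factorization is first proved for that class via the L\'evy continuity theorem, and the extension to general $X$ is then obtained by a second limit, through the proof of the L\'evy--It\^o decomposition (a general L\'evy process as a limit of Brownian-plus-compound-Poisson processes). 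Your proposal instead asks for a single-step scheme $Q^{(n)}\Rightarrow X$ in $J_1$ for arbitrary $X$, which requires building the infinite-activity, compensated small-jump part of $X$ directly out of the batch-arrival and catastrophe mechanisms; you correctly flag this as the main obstacle, but you cannot discharge it by invoking ``the construction carried out later in the paper,'' because that construction does not cover this case. The gap is repairable, either by adopting the paper's two-stage structure or by a diagonal argument combining the PRP-to-(BM$+$CP) approximation with the small-jump truncation, but as written step 1 is unsupported. Your remaining steps are sound and in fact slightly different in flavor from the paper's: you use almost-sure continuity of $X$ at the independent time $e_q$, $J_1$-continuity of the infimum-and-reflection functional at such paths, and the continuous-mapping theorem, followed by the elementary fact that independence survives weak convergence of pairs, whereas the paper works directly with joint characteristic functions and the L\'evy continuity theorem under uniform-on-compacts convergence; both routes are valid, and yours buys a cleaner limit argument at the cost of the harder approximation step.
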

\begin{proof} Suppose first that $\tilde{X}$ is a L\'evy process that consists of only a Brownian component and a compound Poisson component.  In this case, there exists a sequence of PRP systems $\{\tilde{X}_n\}_{n \geq 1}$, such that $\tilde{X}_n$ converges uniformly on compact sets to $\tilde{X}$:  in fact, each $\tilde{X}_{n}$ process is also a L\'evy process.  We omit the details on constructing the $\{\tilde{X}_n\}_{n}$ sequence, as they are somewhat standard:  interested readers can also find them in a previous online version \cite{FralixvanLeeuwaardenBoxma} of the paper.

From Corollary \ref{PRPcorollary}, we see that the Wiener-Hopf factorization is valid for each PRP system with state-independent jumps.  Applying the L\'evy continuity theorem yields, for each $(\omega_{1}, \omega_{2}) \in \mathbb{R}^{2}$,
\begin{eqnarray*} E[e^{i(\omega_{1} \inf_{0 \leq s \leq e_{q}}X(s) + \omega_{2}(X(e_q) - \inf_{0 \leq s \leq e_q}X(s)))}] &=&  \lim_{n \rightarrow \infty}E[e^{i(\omega_{1} \inf_{0 \leq s \leq e_{q}}\tilde{X}_{n}(s) + \omega_{2}(\tilde{X}_{n}(e_q) - \inf_{0 \leq s \leq e_q}\tilde{X}_{n}(s)))}] \\ &=& \lim_{n \rightarrow \infty}  E[e^{i\omega_{1} \inf_{0 \leq s \leq e_{q}}\tilde{X}_{n}(s)}]E[e^{i(\omega_{2}(\tilde{X}_{n}(e_q) - \inf_{0 \leq s \leq e_q}\tilde{X}_{n}(s)))}] \\ &=& E[e^{i \omega_{1} \inf_{0 \leq s \leq e_q}\tilde{X}(s)}]E[e^{i \omega_{2} (\tilde{X}(e_q) - \inf_{0 \leq s \leq e_{q}}\tilde{X}(s))}] \end{eqnarray*}  proving independence.  To derive this result for an arbitrary L\'evy process, use this result in conjunction with the proof of the L\'evy-It\^o decomposition:  again, finer details of this procedure can be found in \cite{FralixvanLeeuwaardenBoxma}. \end{proof}

Our idea of proving a factorization result for a special type of process, then taking limits is similar to the older approaches of proving the Wiener-Hopf factorization, along with related results:  see for instance Percheskii and Rogozin \cite{PercheskiiRogozin}, along with Gusak and Korolyuk \cite{GusakKorolyuk}.  Our approach differs in the fact that we use a discrete state space in continuous time:  this allows us to state a simple sample-path identity, from which we derive a linear system of equations that has a unique solution.  Moreover, our limiting argument makes use of classical heavy-traffic results from queueing theory.  Readers interested in learning more about classical approaches towards proving the Wiener-Hopf factorization are referred to the recent paper of Kuznetsov \cite{Kuznetsov}.

\subsection{An analogous factorization for the reflection}

We now show how to use Corollary \ref{reflectedPRPcorollary} to deduce an analogous factorization for reflected L\'evy processes, with an arbitrary initial state.

\begin{theorem} \label{reflectionLevyThm} Suppose $X$ represents a L\'evy process, and let $e_q$ be an exponential random variable with rate $q > 0$, independent of $X$.  Moreover, let $R := \{R(t); t \geq 0\}$ represent the reflection of $X$, with a reflected barrier at state zero.  Then, assuming $X(0) = x \geq 0$,
\begin{eqnarray} \label{reflectedlevystatement} E_{x}[e^{i \omega R(e_q)}] &=& E_{0}[e^{i \omega R(e_q)}]E_{x}[e^{i \omega \inf_{0 \leq u \leq e_q}R(u)}]. \end{eqnarray} \end{theorem}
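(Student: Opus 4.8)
The plan is to obtain Theorem~\ref{reflectionLevyThm} from Corollary~\ref{reflectedPRPcorollary} by the same discretization-and-limit scheme used in the proof of Theorem~\ref{classicalLevyThm}. First I would reduce to the case where $X$ has only a Brownian component and a compound Poisson component, since the passage from this class to a general L\'evy process is handled exactly as in the last sentence of the proof of Theorem~\ref{classicalLevyThm} (via the L\'evy--It\^o decomposition, with details in~\cite{FralixvanLeeuwaardenBoxma}). For such an $X$, and for a fixed $x \geq 0$, I would invoke the same approximating sequence of PRP systems $\{\tilde{X}_n\}_{n \geq 1}$ with $\tilde{X}_n(0) = x_n \to x$, constructed so that $\tilde{X}_n \to X$ uniformly on compact sets. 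The key observation is that Corollary~\ref{reflectedPRPcorollary} applies to each reflected PRP system $\tilde{X}_{n,0}$ (the reflection of $\tilde{X}_n$ at level zero), giving for every $\omega \in \mathbb{R}$
\begin{eqnarray*} E_{x_n}[e^{i\omega \tilde{X}_{n,0}(e_q)}] = E_{x_n}[e^{i\omega \inf_{0 \leq u \leq e_q}\tilde{X}_{n,0}(u)}]\, E_{0}[e^{i\omega \tilde{X}_{n,0}(e_q)}]. \end{eqnarray*}

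Next I would pass to the limit in $n$ in this identity. The reflection map $y(\cdot) \mapsto y(\cdot) - \min(0, \inf y(\cdot))$ (in the one-sided Skorokhod sense) is continuous on the space of c\`adl\`ag paths with respect to uniform-on-compacts convergence, so uniform convergence of $\tilde{X}_n$ to $X$ on compact sets forces $\tilde{X}_{n,0}(t) \to R(t)$ and $\inf_{0 \leq u \leq t}\tilde{X}_{n,0}(u) \to \inf_{0 \leq u \leq t}R(u)$ jointly, for each fixed $t$; randomizing over the independent time $e_q$ and applying bounded convergence gives convergence of each of the three characteristic functions above to its counterpart for $R$ started at $x$, $R$ started at $x$, and $R$ started at $0$, respectively. (For the middle factor one should note that $\inf_{0 \leq u \leq e_q}R(u)$ under $P_0$ is identically $0$ because $R \geq 0$, matching the statement.) The L\'evy continuity theorem then upgrades this into the desired factorization~(\ref{reflectedlevystatement}).

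The main obstacle I anticipate is the limiting step for the reflected processes, specifically justifying that the reflection-at-zero of the approximating PRP systems converges to the reflection-at-zero of $X$, and that this convergence is compatible with evaluation at the random time $e_q$ and with taking infima. Two technical points deserve care: first, that the approximating sequence $\{\tilde{X}_n\}$ can be chosen with a prescribed starting state $x_n \to x$ (a harmless modification of the construction in the proof of Theorem~\ref{classicalLevyThm}, since the state space is a rescaled lattice and $x$ need only be approximated); second, that continuity of the Skorokhod reflection map really does hold at the relevant limit paths --- this is standard for the one-dimensional, one-barrier reflection, but one should confirm the approximating reflected processes $\tilde{X}_{n,0}$ agree (in law) with the Skorokhod reflection of $\tilde{X}_n$, which follows from the LCFS-preemptive-resume description of $Q_0$ exactly as in the remark preceding Corollary~\ref{reflectedPRPcorollary}. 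Once these are in place, the remaining manipulations are routine applications of bounded convergence and the L\'evy continuity theorem, and the reduction to general L\'evy $X$ mirrors the end of the proof of Theorem~\ref{classicalLevyThm} verbatim.
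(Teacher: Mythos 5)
Your proposal is correct and follows essentially the same route as the paper: the paper's proof of Theorem \ref{reflectionLevyThm} is exactly to invoke Corollary \ref{reflectedPRPcorollary} for the PRP approximations of a Brownian-plus-compound-Poisson L\'evy process, pass to the limit as in Theorem \ref{classicalLevyThm}, and then extend to general $X$ via the L\'evy--It\^o decomposition; the technical points you flag (convergence of the reflected approximations, identification of the reflected PRP system with the Skorokhod reflection, choice of starting states) are precisely the details the paper leaves implicit under ``completely analogous.'' (The paper also records, after the theorem, an alternative direct derivation from the Wiener--Hopf factorization, but that is separate from its stated proof.)
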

\begin{proof} The proof of this result is completely analogous to the proof of Theorem \ref{classicalLevyThm}.  First, we use Corollary \ref{reflectedPRPcorollary} to establish that it holds for a L\'evy process $X$ that consists of only a Brownian and compound Poisson part.  The general statement then again follows as before, from the proof of the L\'evy-It\^o decomposition.  \end{proof}

Theorem \ref{reflectionLevyThm} can also be derived directly from the Wiener-Hopf factorization.  Here $X(0) = x$, and for each $t \geq 0$
\begin{eqnarray*} R(t) = X(t) - \inf_{0 \leq s \leq t}\min(X(s), 0) \end{eqnarray*} and so
\begin{eqnarray*} R(t) - \inf_{0 \leq s \leq t}R(s) &=& X(t) - \inf_{0 \leq s \leq t}\min(X(s), 0) - \inf_{0 \leq s \leq t}\left((X(s) - \inf_{0 \leq u \leq s}\min(X(u), 0)\right). \end{eqnarray*}  Let $\tau_{0} = \inf\{t \geq 0:  X(t) = 0\}$.  If $\tau_{0} > t$, then
\begin{eqnarray*} R(t) - \inf_{0 \leq s \leq t}R(s) &=& X(t) - \inf_{0 \leq s \leq t}X(s) \end{eqnarray*} since $\min(X(s), 0) = 0$ for $0 \leq s \leq \tau_{0}$.  Next, if $\tau_{0} \leq t$, we also see that
\begin{eqnarray*} R(t) - \inf_{0  \leq s \leq t}R(s) = X(t) - \inf_{0 \leq s \leq t}X(s) - \inf_{\tau_{0} \leq s \leq t}\left(X(s) - \inf_{\tau_{0} \leq u \leq s}X(u)\right) = X(t) - \inf_{0 \leq s \leq t}X(s)\end{eqnarray*} since $\inf_{\tau_{0} \leq s \leq t}\left(X(s) - \inf_{\tau_{0} \leq u \leq s}X(u)\right) \geq 0$, and $X(\tau_{0}) - \inf_{\tau_{0} \leq u \leq \tau_{0}}X(u) = 0$.  Moreover, for each $t \geq 0$
\begin{eqnarray*} \inf_{0 \leq s \leq t}R(t) = \max(\inf_{0 \leq s \leq t}X(s), 0). \end{eqnarray*}  Thus, for an exponential random variable $e_q$ with parameter $q > 0$, independent of $X$, we have
\begin{eqnarray*} E_{x}[e^{i\omega(R(e_q) - \inf_{0 \leq s \leq e_q}R(s))}e^{i\omega\inf_{0 \leq s \leq e_q}R(s)}] &=& \int_{0}^{\infty}E_{x}[e^{i\omega(R(t) - \inf_{0 \leq s \leq t}R(s))}e^{i\omega\inf_{0 \leq s \leq t}R(s)}]qe^{-qt}dt \\ &=& \int_{0}^{\infty}E_{x}[e^{i\omega(X(t) - \inf_{0 \leq s \leq t}X(s))}e^{i\omega\max(0, \inf_{0 \leq s \leq t}X(s))}]qe^{-qt}dt \\ &=& E_{x}[e^{i\omega(X(e_q) - \inf_{0 \leq s \leq e_q}X(s))}e^{i\omega\max(0, \inf_{0 \leq s \leq e_q}X(s))}] \\ &=& E_{x}[e^{i\omega(X(e_q) - \inf_{0 \leq s \leq e_q}X(s))}]E_{x}[e^{i\omega\max(0, \inf_{0 \leq s \leq e_q}X(s))}] \end{eqnarray*} where the last step follows from the Wiener-Hopf factorization, i.e. Theorem \ref{classicalLevyThm}.

Theorem \ref{reflectionLevyThm} does not seem to be explicitly known, however direct computations of $E_{x}[e^{i \omega R(e_q)}]$ have appeared in various places:  see e.g. Theorem 9.1 of Abate and Whitt \cite{AbateWhittRBM2}, Theorem 2.1 of Abate and Whitt \cite{AbateWhittMM12}, Bingham \cite{Bingham}, Bekker et al. \cite{BekkerBoxmaResing}, and Chapter 9, Theorem 3.10 of Asmussen \cite{Asmussen}, where all of these references address the factorization in the case where $X$ is spectrally positive, i.e. $X$ has only positive jumps.  Theorem \ref{reflectionLevyThm} is also implicitly stated in Example 3 of Palmowski and Vlasiou \cite{PalmowskiVlasiou}, in terms of the steady-state distribution of a reflected L\'evy process that experiences catastrophes at times forming a homogeneous Poisson process.  Their result, like previous references, considers only the spectrally positive case, but their arguments can also be used to establish Theorem 3.4 as well.  Other results similar to Theorem \ref{reflectionLevyThm} can also be found in the recent work of Debicki et al. \cite{DebickiKosinskiMandjes}, and in Kella and Mandjes \cite{KellaMandjes}.

\section{Applications to birth-death processses, and diffusions}

We now apply our factorization identities, i.e. Theorems \ref{mainresult} and \ref{mainreflectedresult}, towards the study of birth-death processes, which form another interesting subclass of PRP systems.  It will also be possible to apply our identity towards the study of diffusion processes as well, as these are often weak limits of birth-death processes.

Readers should note that the transforms derived below can also be modified so that the domain is complex-valued, as we noted in the remark following Theorem \ref{mainresult} above.

\subsection{Birth-death processes}

Suppose that $Q := \{Q(t); t \geq 0\}$ represents a birth-death process on the integers, with birth rates $\{\lambda_{n}\}_{n \in \mathbb{Z}}$ and death rates $\{\mu_{n}\}_{n \in \mathbb{Z}}$.  Let $e_q$ represent an exponential random variable with rate $q > 0$, independent of $Q$.  Throughout we assume that $Q$ is ergodic, and we let $\pi$  represent its stationary distribution.  Our object of study is now the probability mass function of $Q(e_q)$.  We remind readers that $Q$ can easily be related to a PRP system:  units arrive according to a collection of independent Poisson processes $\{A_{0,j}\}_{j \in \mathbb{Z}}$ where $A_{0,j}$ has rate $\lambda_j$, each customer brings to the system a unit exponential amount of work, and the server processes work at a rate $\mu_n$ whenever the system is in state $n$, for $n \in \mathbb{Z}$.

By Corollary 4.1.1 of Abate and Whitt \cite{AbateWhittMM12}, we see that for each $n \in \mathbb{Z}$,
\begin{eqnarray*} P_{0}(Q(e_q) = n) = \frac{\pi_{n}E_{n}[e^{-q \tau_{0}}]}{\sum_{k \in \mathbb{Z}}\pi_{k}E_{k}[e^{-q \tau_{0}}]} \end{eqnarray*} where $P_{n}$ is meant to represent a conditional probability, given $Q(0) = n$.  This expression also holds in the absence of ergodicity, and also for complex $q$ when $P_{0}(Q(e_q) = n)$ is interpreted as a Laplace transform, multiplied by $q$.

However, suppose we would like to change the initial condition.  While the same method will tell us that
\begin{eqnarray*} P_{n_0}(Q(e_q) = n) = \frac{\pi_{n}E_{n}[e^{-q \tau_{n_0}}]}{\sum_{j \in \mathbb{Z}}\pi_{j}E_{j}[e^{-q \tau_{n_0}}]} \end{eqnarray*} for an arbitrary $n_0$, we must be careful:  how do we know that $E_{n}[e^{-q \tau_{n_0}}]$ is tractable?  This is a very legitimate question, as there are many instances where $E_{n}[e^{-q \tau_{n_0}}]$ will be tractable for some choices of $n_0$, but not for others.

Thus, the key to computing these probabilities is to choose the appropriate \emph{reference point}, i.e. the point found in the hitting-time Laplace-Stieltjes transforms given in the pmf of $Q(e_q)$.  This is where our factorization identities become useful:  they allow us to use whatever reference point we like, regardless of the initial value.

We illustrate our approach by computing the pmf of the number of customers in an $M/M/s$ queueing system at an independent exponential time $e_q$.  The reader will see that our expressions will be given in terms of an $M/M/1$ model and an $M/M/\infty$ model, which are much simpler.

\subsubsection{The $M/M/s$ queue}

Recall that the $M/M/s$ queue is a birth-death process on $\{0,1,2, \ldots\}$ with birth rates $\lambda_{n} = \lambda$, for $n \geq 0$, and death rates $\mu_{n} = \min\{n,s\}\mu$, for $n \geq 1$.  A classical reference on the time-dependent behavior of the $M/M/s$ queue is Saaty \cite{Saaty}, which makes use of the approach found in Bailey \cite{Bailey}.

Assume first that $Q(0) = s$.  In this case, for each $n \geq 0$,
\begin{eqnarray*} P_{s}(Q(e_q) = n) = \frac{\pi_{n}E_{n}[e^{-q \tau_{s}}]}{\sum_{j \geq 0}\pi_{j}E_{j}[e^{-q \tau_{s}}]}. \end{eqnarray*}  This is a nice expression:  notice that if $k < s$, $E_{k}[e^{-q \tau_{s}}]$ is the Laplace-Stieltjes transform of the amount of time it takes an $M/M/s$ queue to go from level $k$ to level $s$, but this is the same as the Laplace-Stieltjes transform of the amount of time it takes to go from $k$ to $s$ in an $M/M/\infty$ queue, with arrival rate $\lambda$ and service rate $\mu$.  Similarly, for $k > s$, $E_{k}[e^{-q \tau_{s}}]$ is just the LST of the amount of time it takes to go from level $k$ to level $s$ in an $M/M/1$ queue, with arrival rate $\lambda$ and service rate $s\mu$.  Hence, all of the terms in our expression for $P_{s}(Q(e_q) = k)$ can theoretically be derived from two simpler models, the $M/M/1$ queue and the $M/M/\infty$ queue.

For $k > s$, we already have a closed-form expression for $E_{k}[e^{-q \tau_{s}}]$:  letting $\psi(q) = E_{s+1}[e^{-q \tau_{s}}]$ be the busy period of an $M/M/1$ queue with arrival rate $\lambda$ and service rate $s\mu$, we see that
\begin{eqnarray*} E_{k}[e^{-q \tau_{s}}] = \psi(q)^{k-s}. \end{eqnarray*}
We now focus on the case where $k < s$.  Letting $\{Q_{M/M/\infty}(t); t \geq 0\}$ represent the queue-length process of an $M/M/\infty$ queue (including the customers in service), we use a classical argument found in Darling and Siegert \cite{DarlingSiegert} to find that
\begin{eqnarray*} P_{k}(Q_{M/M/\infty}(e_q) = s) &=& P_{k}(Q_{M/M/\infty}(e_q) = s, \tau_{s} \leq e_q) \\ &=& P_{s}(Q_{M/M/\infty}(e_q) = s)E_{k}[e^{-q \tau_{s}}] \end{eqnarray*} giving
\begin{eqnarray} \label{hittingtimeratio} E_{k}[e^{-q \tau_{s}}] = \frac{P_{k}(Q_{M/M/\infty}(e_q) = s)}{P_{s}(Q_{M/M/\infty}(e_q) = s)}. \end{eqnarray}

To compute $P_{k}(Q_{M/M/\infty}(e_q) = s)$, we need to use the following known lemma.  The $\mu = 1$ case was observed in Flajolet and Guillemin \cite{FlajoletGuillemin}, but we repeat it here for convenience.
\begin{lemma} \label{KummerLemma} For a positive real number $q$, \begin{eqnarray*} \int_{0}^{\infty}qe^{-(qt + \rho(1 - e^{-\mu t}))}dt &=& M\left(1, \frac{q}{\mu} + 1, -\rho \right) \end{eqnarray*} where $M$ is Kummer's function, i.e.
\begin{eqnarray*} M(a,b,z) = \sum_{n=0}^{\infty}\frac{(a)_n z^{n}}{(b)_n n!} \end{eqnarray*} with $(a)_0 = 1$, and for $n \geq 1$, $(a)_n = (a)(a + 1)\cdots(a + n - 1)$. \end{lemma}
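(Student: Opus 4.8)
The plan is to prove the identity by expanding the integrand as a power series in $e^{-\mu t}$, integrating term by term, and closing with one appeal to Kummer's transformation. First I would split the exponent as $\rho(1 - e^{-\mu t}) = \rho - \rho e^{-\mu t}$, giving
\[ e^{-\rho(1-e^{-\mu t})} = e^{-\rho}\sum_{n=0}^{\infty}\frac{\rho^{n}}{n!}\,e^{-n\mu t}. \]
Since $\rho \geq 0$ and $q > 0$, every term is nonnegative, so Tonelli's theorem lets me interchange the sum and the integral:
\[ \int_{0}^{\infty}qe^{-(qt + \rho(1 - e^{-\mu t}))}dt = qe^{-\rho}\sum_{n=0}^{\infty}\frac{\rho^{n}}{n!}\int_{0}^{\infty}e^{-(q + n\mu)t}dt = e^{-\rho}\sum_{n=0}^{\infty}\frac{\rho^{n}}{n!}\cdot\frac{q/\mu}{q/\mu + n}. \]

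Setting $a := q/\mu$, the next step is to rewrite the coefficient $a/(a+n)$ using Pochhammer symbols: since $(a)_{n} = a(a+1)\cdots(a+n-1)$ and $(a+1)_{n} = (a+1)(a+2)\cdots(a+n)$ satisfy $(a)_{n}/(a+1)_{n} = a/(a+n)$, the series becomes
\[ e^{-\rho}\sum_{n=0}^{\infty}\frac{(a)_{n}}{(a+1)_{n}}\cdot\frac{\rho^{n}}{n!} = e^{-\rho}M\!\left(a,\, a+1,\, \rho\right). \]
Finally I would apply Kummer's transformation $M(a,b,z) = e^{z}M(b-a,b,-z)$ with $b = a+1$ and $z = \rho$, which gives $M(a,a+1,\rho) = e^{\rho}M(1,a+1,-\rho)$, so the whole left-hand side equals $M(1, q/\mu + 1, -\rho)$, as claimed.

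There is no genuine obstacle here: the argument is bookkeeping. The only steps deserving a sentence of justification are the interchange of sum and integral (automatic by nonnegativity, as noted) and the invocation of Kummer's transformation. If one prefers a self-contained argument, the transformation can be avoided: one checks directly that $f(\rho) := \int_{0}^{\infty}qe^{-(qt+\rho(1-e^{-\mu t}))}dt$ and $M(1,a+1,-\rho)$ both satisfy the first-order linear ODE $\rho y'(\rho) + (\rho+a)y(\rho) = a$ with $y(0) = 1$ — the former by differentiating under the integral sign and integrating by parts, the latter by a short Pochhammer computation on the defining series — and then appeals to uniqueness. Either way, the term-by-term integration displayed above is the only substantive computation.
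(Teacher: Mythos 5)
Your argument is correct, but it is not the route the paper takes. The paper's proof is a single iterated integration by parts: writing $f_q:=\int_0^\infty qe^{-qt}e^{-\rho(1-e^{-\mu t})}dt$, one integration by parts gives $f_q = 1-\frac{\rho\mu}{q+\mu}f_{q+\mu}$, and iterating this relation (with a remark that the remainder term vanishes in the limit, since the prefactor $\rho^n\mu^n/\bigl((q+\mu)\cdots(q+n\mu)\bigr)\to 0$ while the leftover integral stays bounded by $1$) produces the alternating series $\sum_{n\ge 0}\frac{(-\rho)^n}{(q/\mu+1)_n}=M\bigl(1,\tfrac{q}{\mu}+1,-\rho\bigr)$ directly, with no appeal to any hypergeometric identity. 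You instead expand $e^{\rho e^{-\mu t}}$ as a power series, integrate term by term (correctly justified by Tonelli), recognize $e^{-\rho}M(a,a+1,\rho)$ with $a=q/\mu$ via $(a)_n/(a+1)_n=a/(a+n)$, and close with Kummer's transformation $M(a,b,z)=e^{z}M(b-a,b,-z)$; all of these steps check out. The trade-off is that your main route is cleaner on the analytic side (nonnegative terms, no remainder estimate) but outsources the final step to a classical identity, whereas the paper's computation is self-contained; your ODE alternative (both sides satisfy $\rho y'+(\rho+a)y=a$, $y(0)=1$, verified for the integral by differentiating under the integral and one integration by parts) is also valid and would make your proof self-contained, essentially re-deriving the Kummer transformation in this special case. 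Your series form also has a pleasant probabilistic reading — the integral is $E[q/(q+N\mu)]$ for $N$ Poisson with mean $\rho$ — which the paper's telescoping computation does not expose.
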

\begin{proof} Applying partial integration gives
\begin{eqnarray*} \int_{0}^{\infty}e^{-\rho (1 - e^{-\mu t})}qe^{-q t}dt &=& 1 - \rho \mu \int_{0}^{\infty}e^{-(q + \mu)t}e^{-\rho(1 - e^{-\mu t})}dt. \end{eqnarray*}  After repeatedly applying partial integration and taking limits, we get the result. \end{proof}

\begin{lemma} For each $k \leq s$,
\begin{eqnarray*} P_{k}(Q_{M/M/\infty}(e_q) = s) &=& \sum_{j=0}^{k}\sum_{m=0}^{k + s - 2j}{k \choose j}{k + s - 2j \choose m}\frac{(\rho)^{s-j}(-1)^{m}}{(s-j)!}\frac{q}{q + (j+m)\mu}M\left(1, \frac{q}{\mu} + j + m + 1, -\rho\right). \end{eqnarray*} \end{lemma}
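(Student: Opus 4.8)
The plan is to compute the probability mass function of the $M/M/\infty$ queue-length at the exponential time $e_q$, starting from an arbitrary state $k \le s$, by exploiting the well-known fact that an $M/M/\infty$ queue started at $k$ decomposes as the sum of two independent pieces: a pure death (linear death) process describing the fate of the $k$ customers present at time zero, and an $M/M/\infty$ queue started empty, describing the customers that arrive afterward. Concretely, if $D(t)$ is Binomial$(k, e^{-\mu t})$ (each of the $k$ initial customers survives to time $t$ independently with probability $e^{-\mu t}$) and $N(t)$ is the number-in-system at time $t$ of an $M/M/\infty$ queue started empty — which is Poisson with mean $\rho(1 - e^{-\mu t})$, where $\rho = \lambda/\mu$ — then $Q_{M/M/\infty}(t) \stackrel{d}{=} D(t) + N(t)$ with $D(t)$ and $N(t)$ independent. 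So the first step is to write
\begin{eqnarray*}
P_{k}(Q_{M/M/\infty}(e_q) = s) = \int_{0}^{\infty} \sum_{j=0}^{k} {k \choose j}(e^{-\mu t})^{j}(1 - e^{-\mu t})^{k-j} \, e^{-\rho(1-e^{-\mu t})}\frac{(\rho(1-e^{-\mu t}))^{s-j}}{(s-j)!} \, q e^{-qt} \, dt,
\end{eqnarray*}
where the Poisson term forces $N(e_q) = s - j$ and is zero when $j > s$, which is automatic since $j \le k \le s$.

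The second step is purely algebraic: expand the powers of $(1 - e^{-\mu t})$. There are two such factors, $(1-e^{-\mu t})^{k-j}$ from the binomial and $(1-e^{-\mu t})^{s-j}$ from the Poisson mass, which combine to $(1 - e^{-\mu t})^{k+s-2j}$. Applying the binomial theorem to this gives $\sum_{m=0}^{k+s-2j} {k+s-2j \choose m} (-1)^{m} e^{-m\mu t}$, and multiplying by the surviving factor $e^{-j\mu t}$ collects everything into a single sum over $m$ with weight $e^{-(j+m)\mu t}$. At this point the integrand is a finite linear combination, over $j$ and $m$, of terms of the form
\begin{eqnarray*}
{k \choose j}{k+s-2j \choose m}\frac{(\rho)^{s-j}(-1)^{m}}{(s-j)!} \, q \, e^{-(j+m)\mu t} e^{-qt} e^{-\rho(1-e^{-\mu t})},
\end{eqnarray*}
so the whole computation reduces to evaluating $\int_{0}^{\infty} q e^{-(j+m)\mu t} e^{-qt} e^{-\rho(1-e^{-\mu t})}\,dt$ for each pair $(j,m)$.

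The third step handles this integral. Pulling out the constant $e^{-\rho}$ is not quite the right move; instead, substitute $u = e^{-\mu t}$ or, more cleanly, recognize that $\int_{0}^{\infty} q e^{-(q + (j+m)\mu)t} e^{-\rho(1-e^{-\mu t})}\,dt = \frac{q}{q+(j+m)\mu}\int_{0}^{\infty}(q+(j+m)\mu)e^{-(q+(j+m)\mu)t}e^{-\rho(1-e^{-\mu t})}\,dt$, i.e. the tail integral is $\frac{q}{q+(j+m)\mu}$ times the same integral but with the exponential rate $q$ replaced by $q + (j+m)\mu$. By Lemma \ref{KummerLemma} (applied with $q$ replaced by $q + (j+m)\mu$, so that $\frac{q+(j+m)\mu}{\mu} + 1 = \frac{q}{\mu} + j + m + 1$), this inner integral equals $M(1, \frac{q}{\mu} + j + m + 1, -\rho)$. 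Substituting back and reassembling the double sum gives exactly the claimed formula.

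The only genuine obstacle is the decomposition in Step 1 — one must be comfortable asserting the independence of the "old" and "new" customers in the $M/M/\infty$ queue, and the linear-death / Poisson forms of the two components; everything after that is bookkeeping with binomial coefficients and a single appeal to Lemma \ref{KummerLemma}. One should also check the harmless edge convention that the Poisson mass $\frac{(\rho(1-e^{-\mu t}))^{s-j}}{(s-j)!}$ already enforces $j \le s$, so no spurious terms appear and the outer sum can honestly run from $j=0$ to $j=k$.
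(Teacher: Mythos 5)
Your proposal is correct and follows essentially the same route as the paper: the paper's proof simply cites the binomial--Poisson convolution representation of the transient $M/M/\infty$ distribution, integrates the pmf against the exponential density, and invokes Lemma \ref{KummerLemma}, which is precisely what you carry out (your rate-shift observation, replacing $q$ by $q+(j+m)\mu$ so that the Kummer parameter becomes $\frac{q}{\mu}+j+m+1$, is exactly the intended application of that lemma). Your bookkeeping with the binomial expansions and the range of $j$ is also accurate, so there is nothing to add.
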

\begin{proof} This identity can be derived from the known fact that, at a fixed time $t \geq 0$, $Q(t)$ is the convolution of a binomial random variable with parameters $(k,e^{-\mu t})$ and a Poisson random variable with parameter $\rho(1 - e^{-\mu t})$.  The result then follows by integrating the pmf of $Q(t)$, and applying Lemma \ref{KummerLemma}.   
\end{proof}  By making use of this lemma in equation (\ref{hittingtimeratio}), we arrive at the following result.
\begin{lemma} \label{MMinftyhittingtimelemma} For each $k \leq s$, we see that
\begin{eqnarray*} E_{k}[e^{-q \tau_{s}}] = \frac{\sum_{j=0}^{k}\sum_{m=0}^{k + s - 2j}{k \choose j}{k + s - 2j \choose m}\frac{(\rho)^{s-j}(-1)^{m}}{(s-j)!}\frac{q}{q + (j+m)\mu}M\left(1, \frac{q}{\mu} + j + m + 1, -\rho\right)}{\sum_{j=0}^{s}\sum_{m=0}^{2(s-j)}{s \choose j}{2(s-j) \choose m}\frac{(\rho)^{s-j}(-1)^{m}}{(s-j)!}\frac{q}{q + (j+m)\mu}M\left(1, \frac{q}{\mu} + j + m + 1, -\rho\right)}. \end{eqnarray*} \end{lemma}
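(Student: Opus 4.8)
The plan is to obtain the statement by direct substitution, with no new ideas required beyond what has already been set up. Equation (\ref{hittingtimeratio}), which was derived via the classical Darling--Siegert argument, states that for every $k \leq s$
\[
E_{k}[e^{-q \tau_{s}}] = \frac{P_{k}(Q_{M/M/\infty}(e_q) = s)}{P_{s}(Q_{M/M/\infty}(e_q) = s)},
\]
and the preceding lemma provides a closed-form finite double sum of Kummer functions for $P_{k}(Q_{M/M/\infty}(e_q) = s)$. So the only task is to evaluate that lemma at the given $k$ (this gives the numerator verbatim) and at $k = s$ (this gives the denominator), and then divide.

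First I would restate (\ref{hittingtimeratio}) and note it is legitimate: the denominator $P_{s}(Q_{M/M/\infty}(e_q) = s)$ is strictly positive, since an $M/M/\infty$ queue started in state $s$ occupies state $s$ at the independent time $e_q$ with positive probability, so the ratio is well defined. Then I would apply the preceding lemma twice. At parameter $k = s$, the inner summation index $m$ ranges from $0$ to $k + s - 2j = 2(s-j)$, the binomial coefficients $\binom{k}{j}\binom{k+s-2j}{m}$ become $\binom{s}{j}\binom{2(s-j)}{m}$, and the exponent of $\rho$ remains $s-j$; substituting these into the general formula yields exactly the denominator displayed in the statement of Lemma \ref{MMinftyhittingtimelemma}. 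Dividing the two expressions completes the proof.

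The only point requiring any care is the bookkeeping in specializing $k = s$: one must check that $\binom{k}{j}$, $\binom{k+s-2j}{m}$, and the upper summation limit $k+s-2j$ really do collapse to $\binom{s}{j}$, $\binom{2(s-j)}{m}$, and $2(s-j)$ respectively, rather than to some slightly different combinatorial expression. This is a routine substitution of $k = s$ and involves no estimation, since both sums are finite and every term is an elementary function times a Kummer function. Thus I would not expect any genuine obstacle here; the lemma is an immediate corollary of (\ref{hittingtimeratio}) together with the preceding lemma, and the write-up is essentially one line of algebra.
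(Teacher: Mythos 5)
Your proposal is correct and matches the paper's own argument: the paper obtains Lemma \ref{MMinftyhittingtimelemma} precisely by substituting the Kummer-function expression of the preceding lemma (evaluated at the given $k$ and at $k=s$) into the ratio identity (\ref{hittingtimeratio}). Your additional remark that the denominator is strictly positive is a harmless, sensible extra check.
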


\begin{remark} As discussed in the remark following Theorem \ref{mainresult}, Lemmas 4.1, 4.2 and 4.3 can be modified so that $q$ is allowed to take on complex values. \end{remark}

Our next step is to use the Wiener-Hopf identity to compute probabilities of the form $P_{k}(Q(e_q) = n)$, for arbitrary $k,n \geq 0$.  Notice that we already have a nice expression for such a pmf, when $k = s$.

\noindent \textbf{Case 1}: $k > s$, $n \leq s$.  Notice that
\begin{eqnarray*} P_{k}(Q(e_q) = n) &=& P_{k}(Q(e_q) = n, \tau_{s} \leq e_q) \\ &=& P_{k}(Q(e_q) = n \mid \tau_{s} \leq e_q)E_{k}[e^{-q \tau_{s}}] \\ &=& P_{s}(Q(e_q) = n) E_{k}[e^{-q \tau_{s}}] \end{eqnarray*} showing, from our previous calculations, that this probability is tractable.  Readers should again note that a similar argument can be made for complex $q = x + iy$ satisfying $x > 0$.  Here
\begin{eqnarray*} \int_{0}^{\infty}P_{k}(Q(t) = n)qe^{-qt}dt &=& \frac{x + iy}{x}E_{k}[e^{-iye_{x}}\textbf{1}(Q(e_{x}) = n)] \\ &=& \frac{x + iy}{x}E_{k}[e^{-iye_{x}}\textbf{1}(Q(e_{x}) = n)\textbf{1}(\tau_{s} \leq e_{x})] \\ &=& \frac{x + iy}{x}E_{k}[e^{-iy(e_{x} - \tau_{s} + \tau_{s})}\textbf{1}(Q(e_{x} - \tau_{s} + \tau_{s}) = n)\textbf{1}(e_{x} \geq \tau_{s})] \\ &=& \frac{x + iy}{x}E_{s}[e^{-iye_{x}}\textbf{1}(Q(e_{x}) = n)]E_{k}[e^{-iy\tau_{s}}\textbf{1}(\tau_{s} \leq e_{x})] \\ &=& E_{k}[e^{-q\tau_{s}}]\int_{0}^{\infty}P_{s}(Q(t) = n)qe^{-qt}dt \end{eqnarray*} where the fourth equality holds by the strong Markov property.

\noindent \textbf{Case 2}:  $k > s$, $n > s$.  This case is much more interesting, since it is possible for our process to go from $k$ to $n$, without ever reaching level $s$ in $[0,e_q]$.  Proceeding in the same manner as in Case 1 yields
\begin{eqnarray*} P_{k}(Q(e_q) = n) &=& P_{k}(Q(e_q) = n, \tau_{s} \leq e_q) + P_{k}(Q(e_q) = n, \tau_{s} > e_q) \\ &=& P_{s}(Q(e_q) = n)E_{k}[e^{-q \tau_{s}}] + \sum_{l=s+1}^{\min\{n,k\}}P_{k}(Q(e_q) = n \mid \inf_{0 \leq u \leq e_q}Q(u) = l)P_{k}(\inf_{0 \leq u \leq e_q}Q(u) = l). \end{eqnarray*}  These terms are computable:  first note that
\begin{eqnarray*} P_{k}(\inf_{0 \leq u \leq e_q}Q(u) = l) &=& P_{k}(\tau_{l} \leq e_q) - P_{k}(\tau_{l-1} \leq e_q) \\ &=& E_{k}[e^{-q \tau_{l}}] - E_{k}[e^{-q \tau_{l-1}}] \\ &=& \psi(q)^{k-l} - \psi(q)^{k - l + 1} \end{eqnarray*} and from Theorem \ref{mainreflectedresult}, we find that conditional on $\inf_{0 \leq u \leq e_q}Q(u) = l$, $Q$ behaves as an $M/M/1$ queue on $[0,e_q]$ with arrival rate $\lambda$ and service rate $s\mu$.  Hence,
\begin{eqnarray*} P_{l}(Q(e_q) = n \mid \inf_{0 \leq u \leq e_q}Q(u) = l) = \left(1 - \frac{\lambda \psi(q)}{s\mu}\right)\left(\frac{\lambda \psi(q)}{s \mu}\right)^{n - l}. \end{eqnarray*}

\noindent \textbf{Case 3}: $0 \leq k < s$, $n \geq s$.  This case is analogous to Case 1:  here
\begin{eqnarray*} P_{k}(Q(e_q) = n) = P_{s}(Q(e_q) = n)E_{k}[e^{-q \tau_{s}}]. \end{eqnarray*}  Now we can use Lemma \ref{MMinftyhittingtimelemma} to express $E_{k}[e^{-q \tau_{s}}]$ in terms of Kummer functions.

\noindent \textbf{Case 4}: $0 \leq k < s$, $n < s$.  As expected, this case is analogous to Case 2, but the expression here is more complicated than the other cases.  Here
\begin{eqnarray*} P_{k}(Q(e_q) = n) &=& P_{k}(Q(e_q) = n, \tau_{s} \leq e_q) + P_{k}(Q(e_q) = n, \tau_{s} > e_q) \\ &=& P_{s}(Q(e_q) = n)E_{k}[e^{-q \tau_{s}}] + \sum_{l=\max\{k,n\}}^{s-1}P_{k}(Q(e_q) = n \mid \sup_{0 \leq u \leq e_q}Q(u) = l)P_{k}(\sup_{0 \leq u \leq e_q}Q(u) = l). \end{eqnarray*}  However, we again observe that
\begin{eqnarray*} P_{k}(\sup_{0 \leq u \leq e_q}Q(u) = l) = E_{k}[e^{-q \tau_{l}}] - E_{k}[e^{-q \tau_{l+1}}] \end{eqnarray*} and conditional on $\sup_{0 \leq u \leq e_q}Q(u) = l$, we use Theorem \ref{mainreflectedresult} to deduce that $Q$ behaves as an $M/M/l/l$ queue on $[0,e_q]$, starting at level $l$.  This yields
\begin{eqnarray*} P_{k}(Q(u) = n \mid \sup_{0 \leq u \leq e_q}Q(u) = l) &=& \frac{\frac{\rho^{n}}{n!}E_{n}[e^{-q \tau_{l}}]}{\sum_{j=0}^{l}\frac{\rho^{j}}{j!}E_{j}[e^{-q \tau_{l}}]} \end{eqnarray*} implying that this final case is tractable as well, in that it can be expressed in terms of Kummer functions.

There is an important lesson to be learned from our calculations of the pmf of $Q(e_q)$.  Given a proper choice of initial point and reference point, our probability mass function of $Q(e_q)$ can be expressed in terms of quantities related to three simpler models:  the $M/M/1$ queue, the $M/M/l/l$ queue, and the $M/M/\infty$ queue.  Had we chosen another reference point different from $s$, our hitting-time transforms would have been much more difficult to compute.

\subsubsection{The $M/M/s/K$ queue}

Our factorization identities can also be used to derive the pmf of the $M/M/s/K$ queue-length process at an independent exponential time $e_q$, where $s$ is the number of servers and $K$ the system capacity.  By choosing our reference point to be $s$, we mimic the procedure used in the $M/M/s$ case to express the desired pmf in terms of two simpler models: the $M/M/s/s$ queue (which is expressible in terms of $M/M/\infty$ hitting-time transforms), and the $M/M/1/(K-s)$ queue.

Note that the relevant hitting-time transforms for the $M/M/1/(K-s)$ queue can be derived from the $M/M/1$ queue, since we can use the pmf of an $M/M/1$ queue at an exponential time to derive the LST of the time it takes us to go from level $j_1$ to level $j_2$ in an $M/M/1$ queue, when $j_1 < j_2$.  Such a result can then be used to derive all of the corresponding hitting-time transforms for an $M/M/1/(K-s)$ queue.

\subsubsection{Time-dependent moments}

It is possible to make use of the factorization identities to derive the moments of $Q(e_q)$ as well.  To illustrate the main idea, we first suppose that $\{Q(t); t \geq 0\}$ represents an $M/M/1$ queue-length process, with arrival rate $\lambda$ and service rate $\mu$.  It has been shown in Abate and Whitt \cite{AbateWhittMM11} that, for each $t \geq 0$,
\begin{eqnarray*} E[Q(t) \mid Q(0) = 0] = \frac{\rho}{1 - \rho}P(R_{\tau} \leq t) \end{eqnarray*} where $\tau$ represents the busy period of an $M/M/1$ queue, and $R_{\tau}$ represents the residual busy period, i.e. for each $t > 0$,
\begin{eqnarray*} P(R_{\tau} > t) = \frac{1}{E[\tau]}\int_{t}^{\infty}P(\tau > x)dx. \end{eqnarray*}  Letting $e_q$ be an exponential r.v. with rate $q > 0$, independent of $Q$, gives
\begin{eqnarray*} E[Q(e_q) \mid Q(0) = 0] &=& \frac{\rho}{1 - \rho}E[e^{-q R_{\tau}}] \\ &=& \frac{\rho}{1 - \rho}\frac{1 - E[e^{-q \tau}]}{qE[\tau]} \\ &=& \frac{\lambda(1 - E[e^{-q \tau}])}{q} \end{eqnarray*} which implies that the first moment of $Q(e_q)$ is tractable, assuming we start in state 0.

Our factorization identities can now be used to compute the first moment of $Q(e_q)$, for any initial condition.  Suppose that $Q(0) = n_0 \geq 0$.  Then
\begin{eqnarray*} E[Q(e_q) \mid Q(0) = n_0] &=& E[Q(e_q) \mid \inf_{0 \leq s \leq e_q}Q(s) = 0, Q(0) = n_0]P(\inf_{0 \leq s \leq e_q}Q(s) = 0 \mid Q(0) = n_0) \\ &+& \sum_{k=0}^{n_0}E[Q(e_q) \mid \inf_{0 \leq s \leq e_q}Q(s) = k, Q(0) = n_0]P(\inf_{0 \leq s \leq e_q}Q(s) = k \mid Q(0) = n_0) \\ &=& E[Q(e_q) \mid Q(0) = 0]P(\inf_{0 \leq s \leq e_q}Q(s) = 0 \mid Q(0) = n_0) \\ &+& \sum_{k=0}^{n_0}(E[Q(e_q) \mid Q(0) = 0] + k)P(\inf_{0 \leq s \leq e_q}Q(s) = k \mid Q(0) = n_0) \\ &=& E[Q(e_q) \mid Q(0) = 0] + \sum_{k=0}^{n_0}k P(\inf_{0 \leq s \leq e_q}Q(s) = k \mid Q(0) = n_0) \\ &=& \frac{\lambda(1 - E[e^{-q \tau}])}{q} + \sum_{k=1}^{n_0}k \psi(q)^{n_0 - k}(1 - \psi(q)). \end{eqnarray*}  The key step in this derivation is the second equality:  if $\inf_{0 \leq s \leq e_q}Q(s) = k$, then Theorem \ref{mainreflectedresult} tells us that $Q(e_q)$ is equal in distribution to an $M/M/1$ queue on the states $\{k, k+1, k+2, \ldots\}$ with arrival rate $\lambda$ and service rate $\mu$.  This result agrees with the result given in \cite{AbateWhittMM12}, and also in \cite{FralixRiano}.  With a bit of patience, higher moments can also be computed through the use of this approach, but there are better ways to do this for the $M/M/1$ model:  see \cite{FralixRiano} for details.

An analogous procedure can be used to compute the moments of $Q(e_q)$, for more complicated processes.  Suppose now that $\{Q(t); t \geq 0\}$ represents the queue-length process of an $M/M/s$ queue, with arrival rate $\lambda$ and service rate $\mu$, and $s$ servers.  While the transient moments of the $M/M/s$ queue have been studied in Marcell\'an and P\'erez \cite{MarcellanPerez}, the point here is to show how to construct the moments from simpler birth-death processes.

The key to computing the moments of $Q(e_q)$ for an arbitrary initial condition is to first compute the moments, while assuming that $Q(0) = s$, since we will want to again use $s$ as a reference point when we apply Theorem \ref{mainreflectedresult}.  Again, since $Q$ is a reversible process, we can say that
\begin{eqnarray*} E[Q(e_q) \mid Q(0) = s] = \pi_{0}(q)\sum_{k = 0}^{s}k E_{k}[e^{-q \tau_{s}}]\frac{\rho^{k}}{k!} + \pi_{0}(q)\frac{\rho^{s}}{s!}\sum_{k = s+1}^{\infty}k E_{k}[e^{-q \tau_{s}}](\rho/s)^{k-s} \end{eqnarray*} with
\begin{eqnarray*} \pi_{0}(q) = \left[\sum_{k=0}^{s}E_{k}[e^{-q \tau_s}]\frac{(\rho)^{k}}{k!} + \sum_{k = s + 1}^{\infty}\frac{(\rho)^{s}}{s!}\left(\rho/s\right)^{k-s}E_{k}[e^{-q \tau_s}]\right]^{-1} \end{eqnarray*} being the normalizing constant.
There are a few observations here worth noting.  First, notice that
\begin{eqnarray*} \pi_{0}(q)\sum_{k=0}^{s}k E_{k}[e^{-q \tau_{s}}]\frac{\rho^{k}}{k!} &=& P_{s}(Q(e_q) \leq s)E_{s}[Q_{M/M/s/s}(e_q)] \end{eqnarray*} where $Q_{M/M/s/s}$ represents an $M/M/s/s$ loss model with arrival rate $\lambda$, service rate $\mu$, and $s$ servers, and this is a known expected value; see Abate and Whitt \cite{AbateWhittErlangLoss} for details.  Second, we see that
\begin{eqnarray*} \pi_{0}(q)\frac{\rho^{s}}{s!}\sum_{k = s+1}^{\infty}k E_{k}[e^{-q \tau_{s}}](\rho/s)^{k-s} &=& \pi_{0}(q)\frac{\rho^{s}}{s!}\sum_{k = s+1}^{\infty}(k-s) E_{k}[e^{-q \tau_{s}}](\rho/s)^{k-s} \\ &+& \pi_{0}(q)\frac{\rho^{s}}{s!}\sum_{k = s+1}^{\infty}s E_{k}[e^{-q \tau_{s}}](\rho/s)^{k-s} \\ &=& P_{s}(Q(e_q) \geq s)E_{0}[Q_{M/M/1}(e_q)] + sP_{s}(Q(e_q) \geq s)P_{0}(Q_{M/M/1}(e_q) \geq 1) \end{eqnarray*} where $Q_{M/M/1}$ represents an $M/M/1$ queue with arrival rate $\lambda$ and service rate $s\mu$.  Thus, we conclude that $E[Q(e_q) \mid Q(0) = s]$ is a quantity that can be computed.

To get $E[Q(e_q) \mid Q(0) = i]$ for an arbitrary $i \geq 0$, we now invoke Theorem \ref{mainreflectedresult}.  Suppose first that $i < s$.  Then
\begin{eqnarray*} E[Q(e_q) \mid Q(0) = i] &=& \sum_{j=i}^{s-1}E[Q(e_q) \mid \sup_{0 \leq s \leq e_q}Q(s) = j, Q(0) = i]P(\sup_{0 \leq s \leq e_q}Q(s) = j \mid Q(0) = i) \\
&+& E[Q(e_q) \mid Q(0) = s]P(\tau_{s} \leq e_q) \end{eqnarray*} and we observe from Theorem \ref{mainreflectedresult} that, conditional on $\sup_{0 \leq s \leq e_q}Q(s) = j$, $Q(e_q)$ behaves as an $M/M/j/j$ queue on $\{0,1,2,\ldots,j\}$, meaning
\begin{eqnarray*} E[Q(e_q) \mid \sup_{0 \leq s \leq e_q}Q(s) = j, Q(0) = i] &=& E_{j}[Q_{M/M/j/j}(e_q)]. \end{eqnarray*}  All of the other terms in the sum are, for similar reasons, also tractable.  A similar argument can be used to derive $E[Q(e_q) \mid Q(0) = i]$ for $i > s$; we omit the details.

We also point out that a similar argument can be used to derive moment expressions for the $M/M/s$ queue with exponential reneging, i.e. the $M/M/s-M$ queue, which is the model studied in Garnett et al. \cite{GarnettMandelbaumReiman}.  Such moments would be decomposed into components from an $M/M/s/s$ queue, and a $M/M/1-M$ queue, and the $M/M/1-M$ queue moments have recently been studied in \cite{FralixReneging}.

\subsection{Diffusion processes}

The factorization identities can also be used to establish similar expressions for diffusion processes.  We illustrate how the procedure works by applying it to a classical reflected diffusion:  regulated Brownian motion.

\subsubsection{Regulated Brownian motion}

Suppose that $\{B(t); t \geq 0\}$ represents a Brownian motion, with drift $\mu = -1$ and volatility $\sigma^2 = 1$.  We are interested in understanding the time-dependent behavior of $\{R(t); t \geq 0\}$, where
\begin{eqnarray*} R(t) = B(t) - \inf_{0 \leq u \leq t}\min(B(u),0) \end{eqnarray*} i.e. $R$ is the one-sided reflection of $B$.  Granted, since $B$ is a L\'evy process, we can already use the Wiener-Hopf factorization to derive the Laplace-Stieltjes transform of $R(e_q)$.  However, we will instead be interested in showing how our factorization identities can also be used to derive the probability density function of $R(e_q)$.

To derive this pdf, we will need to know a bit about the distribution of the hitting times associated with a Brownian motion.  Following the classical argument of applying the optional sampling theorem to the Wald martingale, we see that
\begin{eqnarray*} E_{x}[e^{-q \tau_{0}}] = e^{-(-1 + \sqrt{1 + 2q})x}. \end{eqnarray*}  Moreover, $R$ has a unique stationary distribution $\pi$, where $\pi(dx) = 2e^{-2x}dx$.

We will now compute the density of $R(e_q)$, given $R(0) = x_0$:  we denote this density at the point $x$ as $f_{R(e_q)}(x; x_0)$.  Again, we will need to break the calculation up into cases.  Considering first the case where $x > x_0$, we may use Theorem \ref{mainreflectedresult}, along with a weak-convergence argument to show that
\begin{eqnarray*} P_{x_0}(R(e_q) > x) &=& E_{x_0}[e^{-q \tau_{0}}]\frac{\int_{x}^{\infty}E_{y}[e^{-q \tau_{0}}]\pi(dy)}{\int_{0}^{\infty}E_{y}[e^{-q \tau_{0}}]\pi(dy)} \\ &+& \int_{0}^{x_0} \frac{\int_{x}^{\infty}E_{y}[e^{-q \tau_{0}}]\pi(dy)}{\int_{z}^{\infty}E_{y}[e^{-q \tau_{0}}]\pi(dy)}dP(\inf_{0 \leq u \leq e_q}R(u) \leq z). \end{eqnarray*}  Careful readers will note that this identity is valid for a large class of reflected diffusion processes (namely, those processes that are expressible as a scaling-limit of a sequence of birth-death processes), not just for regulated Brownian motion.  Success in using this identity for a given diffusion depends on both the tractability of the hitting-time transforms, and the integrals containing them.

For $x \geq 0$, we can use our expressions for both the hitting-time LST and the stationary distribution to show that
\begin{eqnarray*} \int_{x}^{\infty}E_{y}[e^{-q \tau_{0}}]\pi(dy) &=& \int_{x}^{\infty}e^{-(-1 + \sqrt{1 + 2q})y}2e^{-2y}dy \\ &=& \frac{2}{1 + \sqrt{1 + 2q}}e^{-(1 + \sqrt{1 + 2q})x}. \end{eqnarray*}

Also, for $0 < z < x_0$,
\begin{eqnarray*} P_{x_0}(\inf_{0 \leq u \leq e_q}R(u) \leq z) &=& P_{x_0}(\tau_{z} \leq e_q) \\ &=& E_{x_0}[e^{-q \tau_{z}}] \\ &=& E_{x_0 - z}[e^{-q \tau_{0}}] \\ &=& e^{-(-1 + \sqrt{1 + 2q})(x_0 - z)} \end{eqnarray*} so for positive $z$, we find that the density of $\inf_{0 \leq u \leq e_q}R(u)$ is just
\begin{eqnarray*} dP(\inf_{0 \leq u \leq e_q}R(u) \leq z) = (-1 + \sqrt{1 + 2q})e^{-(-1 + \sqrt{1 + 2q})x_0}e^{(-1 + \sqrt{1 + 2q})z}dz. \end{eqnarray*}  Plugging everything in, we can now say that
\begin{eqnarray*} P_{x_0}(R(e_q) > x) &=& e^{-(-1 + \sqrt{1 + 2q})x_0}e^{-(1 + \sqrt{1 + 2q})x} \\ &+& \int_{0}^{x_0}e^{-(1 + \sqrt{1 + 2q})x}e^{(1 + \sqrt{1 + 2q})z}(-1 + \sqrt{1 + 2q})e^{-(-1 + \sqrt{1 + 2q})x_0}e^{(-1 + \sqrt{1 + 2q})z}dz \\ &=& e^{-(-1 + \sqrt{1 + 2q})x_0}e^{-(1 + \sqrt{1 + 2q})x}\left[1 + \frac{(-1 + \sqrt{1 + 2q})}{2\sqrt{1 + 2q}}\left[e^{2\sqrt{1 + 2q}x_0} - 1\right]\right] \end{eqnarray*} and so after taking derivatives and multiplying by $(-1)$, we find that the transient density of $R(e_q)$, for $x > x_0$, is just
\begin{eqnarray*} f_{R(e_q)}(x; x_0) &=& (1 + \sqrt{1 + 2q})e^{-(-1 + \sqrt{1 + 2q})x_0}e^{-(1 + \sqrt{1 + 2q})x} \\ &+& \frac{q}{\sqrt{1 + 2q}}e^{-(-1 + \sqrt{1 + 2q})x_0}e^{-(1 + \sqrt{1 + 2q})x}\left[e^{2\sqrt{1 + 2q}x_0} - 1\right]. \end{eqnarray*}

We will now focus on computing $f_{R(e_q)}(x; x_0)$, for $x < x_0$.  After applying our weak-convergence results, we see that
\begin{eqnarray*} P_{x_0}(R(e_q) > x) &=& 1 - E_{x_0 - x}[e^{-q \tau_{0}}] + E_{x_0}[e^{-q \tau_{0}}] \frac{\int_{x}^{\infty}E_{y}[e^{-q \tau_{0}}]\pi(dy)}{\int_{0}^{\infty}E_{y}[e^{-q \tau_{0}}]\pi(dy)} \\ &+& \int_{0}^{x}\frac{\int_{x}^{\infty}E_{y}[e^{-q \tau_{0}}]\pi(dy)}{\int_{z}^{\infty}E_{y}[e^{-q \tau_{0}}]\pi(dy)}dP_{x_0}(\inf_{0 \leq u \leq e_q}R(u) \leq z). \end{eqnarray*}  Evaluating this quantity, then taking derivatives shows that the transient density of $R(e_q)$ is just
\begin{eqnarray*} f_{R(e_q)}(x;x_0) &=& (-1 + \sqrt{1 + 2q})e^{-(-1 + \sqrt{1 + 2q})x_0}e^{-(1 - \sqrt{1 + 2q})x} \\ &+& (1 + \sqrt{1 + 2q})e^{-(-1 + \sqrt{1 + 2q})x_0}e^{-(1 + \sqrt{1 + 2q})x} \\ &+& (1 - \sqrt{1 + 2q})(-1 + \sqrt{1 + 2q})e^{-(-1 + \sqrt{1 + 2q})x_0}e^{-(1 - \sqrt{1 + 2q})x} \\ &-& \frac{q}{\sqrt{1 + 2q}}e^{-(-1 + \sqrt{1 + 2q})x_0}e^{-(1 + \sqrt{1 + 2q})x} \\ &=& (-1 + \sqrt{1 + 2q})(2 - \sqrt{1 + 2q})e^{-(-1 + \sqrt{1 + 2q})x_0}e^{-(1 - \sqrt{1 + 2q})x} \\ &+& \frac{\sqrt{1 + 2q} + 1 + q}{\sqrt{1 + 2q}}e^{-(-1 + \sqrt{1 + 2q})x_0}e^{-(1 + \sqrt{1 + 2q})x}. \end{eqnarray*}


\appendix

\section{Palm measures}

Throughout this paper, we assume that all of our random elements reside on a probability space $(\Omega, \mathcal{F}, P)$, where $\Omega$ represents a complete, separable metric space, $\mathcal{F}$ the Borel $\sigma$-field generated by the open sets of the metric, and $P$ a probability measure on $\mathcal{F}$.  These additional restrictions will be needed in order to properly define a collection of Palm measures, which are used to derive our main result.  The reader should not be alarmed by such restrictions, as the space $D[0, \infty)$ endowed with the proper choice of Skorohod metric is a complete, separable metric space, and many queueing processes (and stochastic processes in general) can reside on such a space.  Moreover, $\mathbb{R}_{+}$ is used to represent the nonnegative real line, and $\mathcal{B}$ the Borel $\sigma$-field generated by the open sets of $\mathbb{R_{+}}$.

Let $N := \{N(t); t \geq 0\}$ represent a point process on the nonnegative real line, with mean measure $\mu$, where $\mu(A) = E[N(A)] < \infty$ for all bounded $A \in \mathcal{B}$.  Under such assumptions, it is known that $N$ induces a $\mu$-a.e. unique  probability kernel $\mathcal{P}:  \mathbb{R}_{+} \times \mathcal{F} \rightarrow [0,1]$, where for each fixed $E \in \mathcal{F}$, $\mathcal{P}_{s}(E)$ is a Borel measurable function in $s$, and for each fixed $s \in \mathbb{R}_{+}$, $\mathcal{P}_{s}$ is a probability measure on $\mathcal{F}$.  The probability distributions of this kernel are referred to as the Palm measures of $N$, and these are defined to be the measures that satisfy the following condition:  for each $B \in \mathcal{B}$, and each $A \in \mathcal{F}$,
\begin{eqnarray} \label{Palmdefinition} E[N(B)\textbf{1}_{A}] = \int_{B}\mathcal{P}_{s}(A)\mu(ds). \end{eqnarray}
An important consequence of equation (\ref{Palmdefinition}) is the Campbell-Mecke formula; see for instance Kallenberg \cite{KallenbergRM}.  The proof of this formula follows from applying a monotone class argument to (\ref{Palmdefinition}).
\begin{theorem} (Campbell-Mecke formula) For any measurable stochastic process $\{X(t); t \geq 0\}$, we find that
\begin{eqnarray*} E\left[\int_{0}^{\infty}X(s)N(ds)\right] = \int_{0}^{\infty}\mathcal{E}_{s}[X(s)]\mu(ds) \end{eqnarray*} where $\mathcal{E}_{s}$ represents expectation, under the probability measure $\mathcal{P}_{s}$. \end{theorem}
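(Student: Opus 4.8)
The plan is to obtain the Campbell--Mecke formula from the defining relation (\ref{Palmdefinition}) by a functional monotone class argument, after interpreting ``measurable stochastic process'' to mean that $(\omega,s) \mapsto X(\omega,s)$ is measurable with respect to $\mathcal{F} \otimes \mathcal{B}$. First I would observe that (\ref{Palmdefinition}) is exactly the asserted identity for the special process $X(\omega,s) = \textbf{1}_A(\omega)\textbf{1}_B(s)$, with $A \in \mathcal{F}$ and $B \in \mathcal{B}$ bounded: the left-hand side is $E[N(B)\textbf{1}_A]$, while $\mathcal{E}_s[X(s)] = \textbf{1}_B(s)\mathcal{P}_s(A)$, so the right-hand side is $\int_B \mathcal{P}_s(A)\mu(ds)$, and the two agree. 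Note also that $s \mapsto \textbf{1}_B(s)\mathcal{P}_s(A)$ is Borel measurable, because $s \mapsto \mathcal{P}_s(A)$ is measurable by the construction of the Palm kernel; this observation is needed to make sense of the right-hand side at every stage.

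Next, fix a bounded set $B_0 \in \mathcal{B}$ and let $\mathcal{H}$ be the family of bounded, $\mathcal{F}\otimes\mathcal{B}$-measurable functions $X$ vanishing for $s \notin B_0$ such that $s \mapsto \mathcal{E}_s[X(s)]$ is measurable and the Campbell--Mecke identity holds. The rectangles $A \times B$ with $B \subseteq B_0$ form a $\pi$-system generating the restriction of $\mathcal{F}\otimes\mathcal{B}$ to $\Omega \times B_0$, and by the previous step their indicators lie in $\mathcal{H}$. The family $\mathcal{H}$ is a vector space, contains the indicator of $\Omega \times B_0$, and is stable under bounded monotone limits: for the identity this is the monotone convergence theorem applied to both sides (the right-hand side stays finite since $\mu(B_0) < \infty$ and $X$ is bounded), and the measurability requirement passes to the limit since a pointwise limit of measurable functions is measurable. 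By the functional monotone class theorem, $\mathcal{H}$ therefore contains every bounded $\mathcal{F}\otimes\mathcal{B}$-measurable function that vanishes for $s \notin B_0$.

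The remaining steps remove the auxiliary restrictions. For nonnegative measurable $X$, apply the previous paragraph to $X \wedge n$ with $B_0 = [0,n]$ and let $n \to \infty$; monotone convergence on both sides (now permitting the value $+\infty$, using $N([0,n]) \uparrow N([0,\infty))$ on the left) yields the identity for all nonnegative measurable $X$, with $s \mapsto \mathcal{E}_s[X(s)]$ measurable as an increasing limit. For a general measurable $X$ with $E[\int_0^\infty |X(s)| N(ds)] < \infty$, split $X = X^+ - X^-$, apply the nonnegative case to each part --- both resulting integrals are finite by the integrability hypothesis --- and subtract. That $\omega \mapsto X(\omega,s)$ is $\mathcal{F}$-measurable for each fixed $s$, so that $\mathcal{E}_s[X(s)]$ is well defined, is automatic for product-measurable $X$.

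I do not expect any single hard estimate here; the only real care needed is to track the measurability of $s \mapsto \mathcal{E}_s[X(s)]$ and the finiteness/integrability conditions simultaneously through the monotone class and monotone convergence steps. In particular, since $\mu$ is only assumed locally finite, one must localize to bounded time sets before invoking the monotone class theorem, and only afterwards let the time horizon grow to $[0,\infty)$.
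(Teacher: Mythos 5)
Your proposal is correct and follows the same route the paper indicates: the paper proves the Campbell--Mecke formula precisely by a monotone class argument applied to the defining relation (\ref{Palmdefinition}), which is what you carry out in detail (indicators of rectangles, functional monotone class theorem on bounded time windows, then monotone convergence and positive/negative parts). Your additional care with the measurability of $s \mapsto \mathcal{E}_s[X(s)]$ and the localization to bounded $B_0$ is exactly the bookkeeping the paper leaves implicit.
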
  Throughout, we say that a stochastic process is measurable if it is measurable with respect to the $\sigma$-field $\mathcal{A}$, which is generated by sets of the form $A \times C$, where $A \in \mathcal{B}$, and $C \in \mathcal{F}$, i.e. if for each $B \in \mathcal{B}$, $\{(t, \omega); X(t,\omega) \in B\} \in \mathcal{A}$.

The Campbell-Mecke formula is a very important, fundamental result in the theory of Palm measures, and is typically the main tool used when applying Palm measures to a given problem.  Readers wishing to consult a rigorous treatment of such measures are referred to Chapters 10-12 of \cite{KallenbergRM}:  other classical references on point process theory include the series of textbooks by Daley and Vere-Jones \cite{DaleyVereJones1, DaleyVereJones2}.

A collection of sub-$\sigma$-fields $\{\mathcal{F}_{s}; s \geq 0\}$ of $\mathcal{F}$ is said to be a filtration, if for each $s < t$, $\mathcal{F}_{s} \subset \mathcal{F}_{t}$.  We say that a stochastic process $\{X(t); t \geq 0\}$ is adapted to the filtration if, for each $t \geq 0$, $X(t)$ is measurable with respect to $\mathcal{F}_t$.  Associated with a filtration is a collection of $\sigma$-fields $\{\mathcal{F}_{s-}; s > 0\}$, where $\mathcal{F}_{s-}$ is the smallest $\sigma$-field containing all $\sigma$-fields $\mathcal{F}_{r}$, for $r < s$.  These are standard concepts within stochastic calculus, and can be found in virtually any textbook on the subject.  Some examples of textbooks that focus on point processes, and include such concepts, are Br\'emaud \cite{BremaudPP} and Baccelli and Br\'emaud \cite{BaccelliBremaud}.

We are now ready to quote a result that is used to derive the main result of this paper.  Suppose $N := \{N(t); t \geq 0\}$ represents a point process on $[0, \infty)$, and suppose $\{\mathcal{F}_{t}; t \geq 0\}$ represents a filtration, to which $N$ is adapted.  Within this framework, we say that $N$ is an $\mathcal{F}_{t}$-Poisson process, if (i) $N$ is adapted to the filtration, and (ii) the distribution of $N(a,b]$, conditional on $\mathcal{F}_{a}$, is Poisson with rate
\begin{eqnarray*} \mu(a,b] = \int_{(a,b]}\lambda(s)ds \end{eqnarray*} for some deterministic function $\lambda: [0, \infty) \rightarrow [0, \infty)$ (i.e. $N(a,b]$ is independent of $\mathcal{F}_{a}$).  Under these conditions, we can apply the following result, which is a corollary of a time-dependent analogue of Papangelou's lemma for point processes; see \cite{FralixRianoSerfozo} for details.
\begin{proposition} \label{ASTAprop} If $N$ is an $\mathcal{F}_{t}$-Poisson process, then $\mathcal{P}_t = P$ on $\mathcal{F}_{t-}$, for almost all $t$ (w.r.t. Lebesgue measure). \end{proposition}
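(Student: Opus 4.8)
The plan is to read the identity off the defining relation (\ref{Palmdefinition}) of the Palm kernel, using only the two properties that characterise an $\mathcal{F}_t$-Poisson process, and then to pass from the resulting ``for each fixed event, the equality holds for almost every $t$'' statement to the uniform statement of the proposition via a countable-generation argument for the filtration.

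\noindent \textbf{Step 1 (the fixed-event identity).} Fix $a \geq 0$ and an event $A \in \mathcal{F}_a$, and note that $\mathcal{F}_a \subseteq \mathcal{F}_{s-}$ for every $s > a$. For an arbitrary $b > a$, apply (\ref{Palmdefinition}) with the bounded Borel set $B = (a,b]$ to obtain $E[\textbf{1}_A N(a,b]] = \int_{(a,b]}\mathcal{P}_s(A)\,\mu(ds)$, where $\mu(ds) = \lambda(s)\,ds$ is the mean measure of $N$. On the other hand, since $N$ is an $\mathcal{F}_t$-Poisson process, $N(a,b]$ is independent of $\mathcal{F}_a$ with mean $\mu(a,b]$, while $\textbf{1}_A$ is $\mathcal{F}_a$-measurable; hence $E[\textbf{1}_A N(a,b]] = P(A)\,\mu(a,b] = \int_{(a,b]}P(A)\,\lambda(s)\,ds$. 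Subtracting, $\int_{(a,b]}(\mathcal{P}_s(A) - P(A))\lambda(s)\,ds = 0$ for every $b > a$; since $s \mapsto (\mathcal{P}_s(A) - P(A))\lambda(s)$ is locally integrable and integrates to $0$ over every interval $(a,b]$, it vanishes for Lebesgue-almost every $s > a$, so $\mathcal{P}_s(A) = P(A)$ for almost every $s > a$ at which $\lambda(s) > 0$. (For the point processes to which the proposition is applied, namely the $A_{0,j}$, the rate $\lambda$ is a positive constant, so the exceptional set is genuinely Lebesgue-null; for a general modulated rate one reads the conclusion $\mu$-almost everywhere, or simply fixes the version of the kernel to equal $P$ on $\{\lambda = 0\}$.) That $s \mapsto \mathcal{P}_s(A)$ is Borel measurable, which is what makes this last statement meaningful, is part of the definition of the Palm kernel.

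\noindent \textbf{Step 2 (uniformising over the event).} A generic $A \in \mathcal{F}_{t-}$ need not lie in any single $\mathcal{F}_a$ with $a < t$, so Step 1 cannot be applied to it directly; the remedy is to apply Step 1 to a countable generating family and then extend by uniqueness of measures. Since $Q$ is countable-valued and, up to any finite time, only finitely many of the driving Poisson processes are relevant, each $\mathcal{F}_r$ is countably generated; choosing for each rational $r \geq 0$ a countable generating algebra $\mathcal{B}_r$ of $\mathcal{F}_r$ and letting $\mathcal{A}_r$ be the countable algebra generated by $\bigcup_{q \in \mathbb{Q},\, 0 \leq q \leq r}\mathcal{B}_q$ yields a nondecreasing family of countable algebras with $\sigma(\mathcal{A}_r) = \mathcal{F}_r$. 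Put $\mathcal{A}_\infty := \bigcup_{r \in \mathbb{Q},\, r \geq 0}\mathcal{A}_r$ and, for $A \in \mathcal{A}_\infty$, set $\rho(A) := \inf\{r \in \mathbb{Q} : A \in \mathcal{A}_r\}$, so that $A \in \mathcal{A}_r \subseteq \mathcal{F}_r$ for every rational $r > \rho(A)$. Applying Step 1 to each pair $(A,r)$ with $A \in \mathcal{A}_\infty$ and $r \in \mathbb{Q}$, $r > \rho(A)$, and taking the union of the countably many Lebesgue-null exceptional sets, we obtain a single Lebesgue-null set $Z$ such that, for every $t \notin Z$, the equality $\mathcal{P}_t(A) = P(A)$ holds simultaneously for all $A \in \mathcal{A}_\infty$ with $\rho(A) < t$. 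Since $\mathcal{F}_{t-} = \sigma\big(\bigcup_{r \in \mathbb{Q},\, r < t}\mathcal{A}_r\big)$ and the collection $\{A \in \mathcal{A}_\infty : \rho(A) < t\}$ is an algebra containing $\bigcup_{r \in \mathbb{Q},\, r < t}\mathcal{A}_r$, the probability measures $\mathcal{P}_t$ and $P$ agree on a generating algebra of $\mathcal{F}_{t-}$, hence on all of $\mathcal{F}_{t-}$, for every $t \notin Z$ --- which is the assertion.

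The computational heart, the interval-integration identity of Step 1, is short and uses nothing beyond (\ref{Palmdefinition}) and the Poisson independence property. The part that requires genuine care is Step 2: choosing a countable generating class for the filtration that respects its time structure, so that one Lebesgue-null set works uniformly for all events determined strictly before time $t$. A careful treatment of precisely this passage, in the more general framework of a time-dependent Papangelou lemma, can be found in \cite{FralixRianoSerfozo}.
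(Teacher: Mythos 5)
Your proposal is correct in substance, but note first that the paper does not prove Proposition \ref{ASTAprop} at all: it is quoted as a corollary of a time-dependent analogue of Papangelou's lemma, with the proof deferred to \cite{FralixRianoSerfozo}. So your argument is a genuinely different, self-contained route. Step 1 is exactly the natural computation: testing the defining relation (\ref{Palmdefinition}) on $B=(a,b]$ against an $\mathcal{F}_a$-measurable indicator, using the independence of $N(a,b]$ from $\mathcal{F}_a$, and then differentiating the resulting interval identity; your caveat about the set $\{\lambda = 0\}$, where the kernel is only $\mu$-a.e.\ determined, is the right reading of the statement. Step 2, the passage from ``each fixed $A$, a.e.\ $t$'' to ``a.e.\ $t$, all $A \in \mathcal{F}_{t-}$'', is indeed where the real content lies, and it is the version the paper actually needs, since in the proof of Theorem \ref{mainresult} the event inserted into $\mathcal{P}_s$ varies with $s$. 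Your countable-algebra/$\pi$-system argument there is sound, but the justification of countable generation is off: it is not that ``only finitely many driving Poisson processes are relevant'' (all of them run at all times and all belong to the minimal filtration); rather, each $\mathcal{F}_r$ is countably generated because it is generated by countably many right-continuous processes ($Q$, the $A_{0,j}$, the $A_{1,j,k}$, $D$) evaluated at the rational times in $[0,r]$ together with $r$ itself. Relatedly, the proposition as stated concerns an arbitrary filtration to which $N$ is adapted, so separability of the $\mathcal{F}_r$ is an implicit extra hypothesis in your proof; it holds for the filtration used in this paper, and some such condition (or the heavier machinery of \cite{FralixRianoSerfozo}) is what makes the uniform-in-$A$ statement available. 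The trade-off between the two routes is clear: the Papangelou-lemma approach is more general (it handles stochastic $\mathcal{F}_t$-intensities, the deterministic-intensity case then yielding $\mathcal{P}_t = P$ on $\mathcal{F}_{t-}$), while your direct argument is short, elementary, and entirely sufficient for the application made of the proposition here.
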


\noindent \textbf{Acknowledgements} The authors would like to thank an anonymous referee for providing valuable comments on our paper, and for bringing reference \cite{Millar} to our attention.

\end{document}